\DeclareFontFamily{U}{BOONDOX-calo}{\skewchar\font=45 }
\DeclareFontShape{U}{BOONDOX-calo}{m}{n}{
  <-> s*[1.05] BOONDOX-r-calo}{}
\DeclareFontShape{U}{BOONDOX-calo}{b}{n}{
  <-> s*[1.05] BOONDOX-b-calo}{}
\DeclareMathAlphabet{\mathcalboondox}{U}{BOONDOX-calo}{m}{n}
\SetMathAlphabet{\mathcalboondox}{bold}{U}{BOONDOX-calo}{b}{n}
\DeclareMathAlphabet{\mathbcalboondox}{U}{BOONDOX-calo}{b}{n}
\newenvironment{theorem*}[1]
{\theoremvar}
{\endtheoremvar}
\newenvironment{corollary*}[1]
{\corollaryvar}
{\endtheoremvar}
\newenvironment{lemma*}[1]
{\lemmavar}
{\endtheoremvar}
\theoremstyle{plain}
\newtheorem{thm}{Theorem} %
\newtheorem{prop}{Proposition}[section]      %
\newtheorem{lemma}[prop]{Lemma}
\newtheorem{teo}[thm]{Theorem}  %
\theoremstyle{plain} %
\theoremstyle{definition}
\newtheorem*{assumption*}{Assumption}
\newtheorem{define}[prop]{Definition}
\theoremstyle{remark}
\newtheorem{remark}[prop]{Remark}
\numberwithin{equation}{section} %
\newcommand{\1}{ \mathbbm{1}}   %
\newcommand{\D}{ \, \mathrm{d}} %
\renewcommand{\roman}[1]{%
  \textup{\uppercase\expandafter{\romannumeral#1}}%
}
\renewcommand{\roman}[1]{%
  \textup{\uppercase\expandafter{\romannumeral#1}}%
}
\DeclareFontFamily{U}{mathx}{\hyphenchar\font45}
\DeclareFontShape{U}{mathx}{m}{n}{
  <5> <6> <7> <8> <9> <10>
  <10.95> <12> <14.4> <17.28> <20.74> <24.88>
  mathx10
}{}
\DeclareSymbolFont{mathx}{U}{mathx}{m}{n}
\DeclareMathAccent{\widecheck}{0}{mathx}{"71}
\DeclareMathAccent{\wideparen}{0}{mathx}{"75}
\renewcommand{\hat}[1]{\widehat{#1}} %
\DeclareMathAlphabet\EuScript{U}{eus}{m}{n}
\newcommand{\treestructure}{ \mathcal{T} } %
\newcommand{\tents}{ \mathcalboondox{T} } %
\providecommand{\testC}{ \mathfrak{T} } %
\title{Refined two weight estimates for the Bergman projection}
\author[Brocchi]{Gianmarco Brocchi}
\address{
  School of Mathematics\\
  University of Birmingham\\
  Edgbaston\\
  Birmingham\\
  B15 2TT\\
  England}
\email{gianmarcobrocchi@gmail.com} %
\date{\today}
\begin{document}
\subjclass[2010]{47B38, 30H20, 42B35} %
\keywords{Bergman projection, Two weight, Bump condition}

\begin{abstract}
  We prove sufficient conditions for the two-weight boundedness
  of the Bergman projection on the unit ball.
  The first condition is in terms of Orlicz averages of the weights,
  while the second condition is in terms of the mixed $B_{\infty}$--$B_2$ characteristics.
\end{abstract}

\maketitle

\section{Introduction}
Let $\mathbb{B}^d$ be the complex unit ball in $\mathbb{C}^d$.
The Bergman projection from $L^2(\mathbb{B}^d)$ onto the subspace of holomorphic functions $A^2(\mathbb{B}^d)$
is the integral operator %
\begin{equation*}%
  Pf(z) \coloneqq \int_{\mathbb{B}^d} \frac{f(\zeta)}{(1 - z\bar{\zeta})^{d+1}} \D{\nu}(\zeta)
\end{equation*}
where $\D{\nu}$ is the normalised measure on $\mathbb{B}^d$.
It is a classical result \cite{ZJ64,zbMATH03466041} that the Bergman projection $P$
extends to a bounded operator on $L^p(\mathbb{B}^d)$ for all $p \in (1,\infty)$.
Weighted estimates were brought in this context by Békollè and Bonami \cite{BekolleBonami78, Bekolle82},
where the projection $P$ is seen as a singular integral operator
with respect to a particular pseudo-metric on the unit ball.
See also \cite{McNeal94} where this approach  has been extended to convex domains of finite type.
Békollè and Bonami proved that
given a real-valued, positive function $w \in L^1_{\mathrm{loc}}(\mathbb{B}^d)$,
the Bergman projection $P$ is bounded on the weighted space $L^p(\mathbb{B}^d, w \D{\nu})$
if and only if the weight $w$ satisfies the following condition
\begin{equation}\label{eq:classicalBp}\tag{$\mathcal{B}_p$}
  [w]_{\mathcal{B}_p} \coloneqq \sup_{B} \frac{1}{\lvert B \rvert} \int_{B} w \D{\nu} \left( \frac{1}{\lvert B \rvert} \int_{B} w^{1-p'} \D{\nu} \right)^{p-1} < \infty
\end{equation}
where the supremum is taken over all balls $B \subset \mathbb{B}^d$ with respect to the pseudo-metric mentioned above,
and $\lvert B \rvert \coloneqq \nu(B)$ denotes the measure of $B$.

The optimal dependence of the norm $\lVert P \rVert_{L^p(w) \to L^p(w)}$ on the Békollè--Bonami characteristic $[w]_{\mathcal{B}_p}$
has been obtained a few decades later: in dimension $d = 1$ by Pott and Reguera \cite{PR13},
and in higher dimension by Rahm, Tchoundja, and Wick \cite{RTW17}. They proved that
\begin{equation}\label{eq:L2-oneweight-bound}
  \lVert P \rVert_{L^2(w) \to L^2(w)} \le C \,  [w]_{\mathcal{B}_2}
\end{equation}  
where $C$ is a constant independent on the weight.
Estimate \eqref{eq:L2-oneweight-bound}
is analogous to the celebrated $A_2$-theorem \cite{A2}
from which optimal weighted $L^p$ estimates for $1 < p < \infty$ can be extrapolated,
by mean of the sharp extrapolation theorem \cite{MR2140200}:
\begin{equation*}
  \lVert P \rVert_{L^p(w) \to L^p(w)} \le C_p \,  [w]_{\mathcal{B}_p}^{\max\big\{\frac{1}{p-1}, 1 \big\}} .
\end{equation*}  

In the spirit of \cite{HP13involvingAinfty},
bound \eqref{eq:L2-oneweight-bound} has been refined in \cite{APR17}
by using a  dyadic version of the Békollè--Bonami class
and the $B_\infty$ characteristic introduced in \cite[\S 5]{APR17}.

We now define the joint Békollè--Bonami characteristic,
where the pseudo-balls  in \eqref{eq:classicalBp} are replaced with dyadic tents.
\begin{define}[Dyadic Békollè--Bonami weights]\label{def:dyadicB_p}
  For two weights $w,\sigma$ on $\mathbb{B}^d$ and $p\in (1,\infty)$
  their joint $B_p$ characteristic is
  \begin{equation}\label{eq:dyadicBp}
    [w,\sigma]_{B_p} \coloneqq \sup_{\hat{K} \in \tents} \frac{1}{\lvert \hat{K} \rvert} \int_{\hat{K}} w \D{\nu} \left(\frac{1}{\lvert \hat{K} \rvert} \int_{\hat{K}} \sigma \D{\nu} \right)^{p-1}
  \end{equation}
  where the supremum is taken over the collection of all dyadic tents $\tents$,
  whose construction is postponed to \cref{sec:dyadic_structure_ball}.
  The quantity $\lvert \hat{K} \rvert \coloneqq \nu(\hat{K})$ denotes the volume of the dyadic tent $\hat{K}$.
  When $\sigma = w^{1-p'}$ is the dual weight of $w$,
  the quantity $[w]_{B_p} \coloneqq [w,w^{1-p'}]_{B_p}$ is the $B_p$ characteristic of $w$.
  We  write  $w \in B_p$ if $[w]_{B_p}$ is finite.
\end{define}

The $B_\infty$ characteristic is defined as the Fujii--Wilson characteristic \cite{Wilson87,MR2359017}
for the Muckenhoupt $A_\infty$ class, by mean of the maximal operator
\begin{equation*}
  M f(z) \coloneqq \sup_{\widehat{K} \in \tents} \left(\frac{1}{\lvert \hat{K} \rvert} \int_{\hat{K}} \lvert f \rvert \D{\nu} \right) \1_{\widehat{K}}(z)
\end{equation*}
where $\1_{\hat{K}}$ is the indicator function on $\hat{K}$.

\begin{define}[$B_\infty$ class]\label{def:B_infty}
  A weight $\sigma$ belongs to the class $B_\infty$
  if the quantity
  \begin{equation*}
    [\sigma]_{B_\infty}  \coloneqq \sup_{\hat{K} \in \tents} \frac{1}{\sigma(\hat{K})} \int_{\hat{K}} M(\sigma \1_{\hat{K}}) \D{\nu}
  \end{equation*}
  is finite. We refer to $[\sigma]_{B_\infty} $ as the $B_\infty$ characteristic of the weight $\sigma$.
\end{define}

The class $B_\infty$ has been further studied in \cite{APR19}.
Weights in this class are in general not doubling,
see \cite[Counterexample 1 and Remark 5.1]{Duo+2016} and \cite{zbMATH07554393}.
Nevertheless, the subclass of $B_\infty$ studied in \cite{APR19} enjoys similar properties to the Muckenhoupt $A_\infty$ class.

In \cite[Theorem 5.7 and Corollary 5.9]{APR17} we have the following refinement of
estimate \eqref{eq:L2-oneweight-bound} in $d = 1$.
\begin{theorem*}{A}[Aleman, Pott and Reguera 2017]\label{teo:APR17}
  Let $w \in B_2$ be a weight on $\mathbb{B}^1$. The Bergman projection $P \colon L^2(\mathbb{B}^1,w) \to A^2(\mathbb{B}^1,w)$
  and the following estimate holds
  \begin{equation}\label{eq:Binfty-L2-oneweight-bound}
    \lVert P \rVert_{L^2(w) \to L^2(w)} \le C \, [w]_{B_2}^{1/2} \big( [w]_{B_\infty}^{1/2} + [w^{-1}]_{B_\infty}^{1/2} \big) .
  \end{equation}
\end{theorem*}

\vspace{.5em}

In this note we address the following question:
\begin{quote}
  What are the sufficient conditions on two weight $u$, $\omega$
  for the boundedness of the Bergman projection $P \,\colon L^2(u) \to A^2(\omega)$?
\end{quote}

Progress on this question has been obtained via sparse domination \cite{APR17,Sehba21}.
Sparse domination is a powerful technique in harmonic analysis
to control  operators by maximal averages.
As a consequence, since weighted estimates for the maximal operator are known,
these immediately translate into estimates for the dominated operator.
In the case of the unit disc $\mathbb{B}^1$, Aleman, Pott and Reguera \cite{APR17}
found sufficient and necessary conditions for weights
which are modulus of functions in the Bergman space $A^2(\mathbb{B}^1)$. %

This article presents new sufficient conditions for the two-weight boundedness of
  the Bergman projection on the unit ball.
Our first result generalises estimate \eqref{eq:Binfty-L2-oneweight-bound}
to the two-weight setting and to higher dimension.
\begin{teo}\label{teo:mixed-bounds-Bergman}
  Let $\sigma, w$ be two weights on $\mathbb{B}^d$
  in the class $B_\infty$ %
  such that their joint $B_2$ characteristic $[w, \sigma]_{B_2}$ is finite.
  The Bergman projection $P$ on $L^2(\mathbb{B}^d)$ satisfies the following bound
  \begin{equation*}
    \lVert P(\sigma \,\cdot) \rVert_{L^2(\sigma) \to L^2(w)} \le C \, [w, \sigma]_{B_2}^{1/2} \big( [w]_{B_\infty}^{1/2} + [\sigma]_{B_\infty}^{1/2} \big)
  \end{equation*}
  where $C$ is a positive constant independent of $\sigma$ and $w$.
\end{teo}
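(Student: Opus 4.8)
The plan is to combine the sparse domination of the Bergman projection with a weighted Carleson embedding controlled by the $B_\infty$ characteristics. Throughout, write $\langle h\rangle_{\hat K}\coloneqq\frac{1}{|\hat K|}\int_{\hat K}h\D\nu$ for the unweighted average over a tent and $\langle h\rangle^{\mu}_{\hat K}\coloneqq\frac{1}{\mu(\hat K)}\int_{\hat K}h\D\mu$ for the $\mu$-average. By duality, the asserted operator bound is equivalent to the bilinear estimate $|\langle P(\sigma\phi),g\rangle_{L^2(w)}|\lesssim[w,\sigma]_{B_2}^{1/2}([w]_{B_\infty}^{1/2}+[\sigma]_{B_\infty}^{1/2})\norm{\phi}_{L^2(\sigma)}\norm{g}_{L^2(w)}$ for nonnegative $\phi,g$. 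Since $\langle P(\sigma\phi),g\rangle_{L^2(w)}=\langle P(\sigma\phi),wg\rangle_{L^2(\nu)}$, the sparse domination of $P$ over dyadic tents, available through the dyadic tent structure of the ball, produces a sparse family $\mathcal{S}\subseteq\tents$ with $|\langle P(\sigma\phi),g\rangle_{L^2(w)}|\lesssim\Lambda\coloneqq\sum_{\hat K\in\mathcal{S}}\langle\sigma\phi\rangle_{\hat K}\langle wg\rangle_{\hat K}|\hat K|$. This reduces everything to estimating the sparse form $\Lambda$.

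First I pass to weighted averages, $\langle\sigma\phi\rangle_{\hat K}=\langle\sigma\rangle_{\hat K}\langle\phi\rangle^{\sigma}_{\hat K}$ and $\langle wg\rangle_{\hat K}=\langle w\rangle_{\hat K}\langle g\rangle^{w}_{\hat K}$, so that $\Lambda=\sum_{\hat K\in\mathcal{S}}\langle\sigma\rangle_{\hat K}\langle w\rangle_{\hat K}\langle\phi\rangle^{\sigma}_{\hat K}\langle g\rangle^{w}_{\hat K}|\hat K|$. For $p=2$ the joint characteristic gives $\langle w\rangle_{\hat K}\langle\sigma\rangle_{\hat K}\le[w,\sigma]_{B_2}$, and the idea is to spend only half of this factor, reserving the other half to unlock the $B_\infty$ savings. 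The engine for the latter is the weighted Carleson embedding theorem: if nonnegative coefficients $\{a_{\hat K}\}$ obey the packing bound $\sum_{\hat K\subseteq\hat L}a_{\hat K}\le A\,\mu(\hat L)$ for every tent $\hat L$, then $\sum_{\hat K}a_{\hat K}(\langle\phi\rangle^{\mu}_{\hat K})^2\le 4A\norm{\phi}_{L^2(\mu)}^2$. I will use this with $a_{\hat K}=\sigma(\hat K)$ and with $a_{\hat K}=w(\hat K)$, the relevant packing constants being precisely the two $B_\infty$ characteristics.

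The passage from sparseness to $B_\infty$ is short. Sparseness supplies pairwise disjoint major subsets $E_{\hat K}\subseteq\hat K$ with $|E_{\hat K}|\gtrsim|\hat K|$. For $\hat K\subseteq\hat L$ one has $M(\sigma\1_{\hat L})\ge\langle\sigma\rangle_{\hat K}$ on $E_{\hat K}$, so that $\sum_{\hat K\subseteq\hat L}\sigma(\hat K)\lesssim\sum_{\hat K\subseteq\hat L}|E_{\hat K}|\langle\sigma\rangle_{\hat K}\le\int_{\hat L}M(\sigma\1_{\hat L})\D\nu\le[\sigma]_{B_\infty}\,\sigma(\hat L)$, which is exactly the packing condition with $A\simeq[\sigma]_{B_\infty}$, and symmetrically for $w$. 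The embedding then yields $\sum_{\hat K}\sigma(\hat K)(\langle\phi\rangle^{\sigma}_{\hat K})^2\lesssim[\sigma]_{B_\infty}\norm{\phi}_{L^2(\sigma)}^2$ and $\sum_{\hat K}w(\hat K)(\langle g\rangle^{w}_{\hat K})^2\lesssim[w]_{B_\infty}\norm{g}_{L^2(w)}^2$.

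It remains to assemble these ingredients, and this is where the main obstacle lies. A direct Cauchy--Schwarz, writing $\langle\sigma\rangle_{\hat K}\langle w\rangle_{\hat K}\le[w,\sigma]_{B_2}^{1/2}(\langle\sigma\rangle_{\hat K}\langle w\rangle_{\hat K})^{1/2}$ and distributing $|\hat K|$ between the two factors, already gives $\Lambda\lesssim[w,\sigma]_{B_2}^{1/2}[\sigma]_{B_\infty}^{1/2}[w]_{B_\infty}^{1/2}\norm{\phi}_{L^2(\sigma)}\norm{g}_{L^2(w)}$, that is, the \emph{product} of the two $B_\infty$ characteristics. Since the product of the square roots can exceed their sum, upgrading this to the sharper sum $[w]_{B_\infty}^{1/2}+[\sigma]_{B_\infty}^{1/2}$ is the crux. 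The plan is to follow the Hytönen--Pérez scheme: decompose the sparse tree into coronas on which the $\sigma$-averages, respectively the $w$-averages, stay essentially constant, use the joint $B_2$ bound to trade one weight for the other across consecutive coronas, and invoke the weighted Carleson embedding only once per corona, so that each piece of $\Lambda$ is charged to a single $B_\infty$ characteristic. Summing the two resulting families of coronas produces the additive combination, with the factor $[w,\sigma]_{B_2}^{1/2}$ carried throughout; specialising to $d=1$ and $\sigma=w^{-1}$ recovers the one-weight bound of Theorem A.
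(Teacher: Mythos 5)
Your preparatory work is correct as far as it goes: the reduction to the bilinear sparse form, the identity $\langle\sigma\phi\rangle_{\hat K}=\langle\sigma\rangle_{\hat K}\langle\phi\rangle^{\sigma}_{\hat K}$, the packing estimate $\sum_{\hat K\subseteq \hat L}\sigma(\hat K)\lesssim[\sigma]_{B_\infty}\,\sigma(\hat L)$ obtained from sparseness plus the Fujii--Wilson definition, and the weighted Carleson embedding are all sound (the tents of a single Bergman tree do form a tree, so the embedding applies). You also compute, correctly, that assembling these by Cauchy--Schwarz yields only $[w,\sigma]_{B_2}^{1/2}[\sigma]_{B_\infty}^{1/2}[w]_{B_\infty}^{1/2}$, the \emph{product}, which for large characteristics is strictly weaker than the claimed sum. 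The problem is that the step which would repair this --- what you yourself call the crux --- is never executed: your final paragraph is a plan, not an argument. Moreover, the plan as described is not the right mechanism. The obstruction is structural: any scheme that spends the $B_\infty$-based packing once in the $\phi$-variable and once in the $g$-variable on the same portion of the form will inevitably output the product of the two characteristics, and ``coronas on which the weight averages stay essentially constant'' together with ``trading one weight for the other'' gives no visible way around this.

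What actually produces the sum --- and is what the paper does --- is the two-weight testing theorem for positive sparse operators of Lacey--Sawyer--Uriarte-Tuero (see also \cite{Kangwei2weight}): $\lVert \Lambda_{\mathscr{S}}(\sigma\,\cdot)\rVert_{L^2(\sigma)\to L^2(w)}\eqsim \testC^{1/2}+(\testC')^{1/2}$, with $\testC,\testC'$ the Sawyer testing constants of \eqref{eq:testing_constants}. The additive structure comes from this theorem, not from any splitting of the sparse family: in its proof the Carleson embedding is applied only to the principal (stopping) cubes of $\phi$ and of $g$, whose packing constants are \emph{universal}, so no $B_\infty$ is consumed there. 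The $B_\infty$ characteristics enter only when the two testing constants are estimated separately, and each testing constant sees exactly one of them: with the indicator input $\1_{\hat K_0}$ one expands the square via $\big(\sum_L a_L\big)^2\le 2\sum_L a_L\sum_{L'\subseteq L}a_{L'}$, pulls out $\langle\sigma\rangle_{L'}\langle w\rangle_{L'}\le[w,\sigma]_{B_2}$, collapses $\sum_{L'\subseteq L}\lvert L'\rvert\lesssim\lvert L\rvert$ by sparseness, and concludes with your packing bound; this gives $\testC\lesssim[w,\sigma]_{B_2}[\sigma]_{B_\infty}$ with no appearance of $[w]_{B_\infty}$ whatsoever (this is \cref{prop:B2-Binfty-bound}), and symmetrically $\testC'\lesssim[w,\sigma]_{B_2}[w]_{B_\infty}$. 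Note that if you instead dualize the testing quantity against $g\in L^2(w)$ and invoke your $w$-embedding, you fall straight back to the product --- exactly the trap your sketch does not escape. So to complete your proof you must either quote the testing theorem and convert your packing estimate into a verification of the two testing conditions, or reprove that theorem via parallel coronas on the function averages; as written, the decisive step is missing.
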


\begin{remark} A few remarks are in order.
  \begin{itemize}
  \item The classical $\mathcal{B}_2$ characteristic is defined using Carleson tents, which we recall in \cref{subsec:BBweights}.
    For two weights $w,\sigma$ in $B_\infty$ their classical  $\mathcal{B}_2$ characteristic
    and the dyadic one in \eqref{eq:dyadicBp} are comparable, see \cref{subsec:comparison}. 

  \item \cref{teo:mixed-bounds-Bergman} %
    extends \cref{teo:APR17} %
    to higher dimension
    and also to general weights $w, \sigma$ that are not dual to each other.

  \item For the one-weight theory,
    the estimate in \cref{teo:mixed-bounds-Bergman} improves on the $B_2$ estimates in \cite{RTW17},
    since $[\sigma]_{B_\infty} \le [\sigma]_{B_2}$.
    We recall the bound proved in the original paper \cite{APR17}
    in \cref{prop:Binfty_improvement}.
  \end{itemize}
\end{remark}

Our second result is a sufficient condition in terms of Orlicz averages of two weights.
Orlicz spaces generalise $L^p$ spaces and their norms are defined using Young functions, which we recall in \cref{subsec:Orlicz}.
\begin{define}\label{def:Orlicz_bump}
  Given two weights $w,\sigma$ and two Young functions $\Phi,\Psi$, %
  we define  the joint Orlicz bump as
  \begin{equation*}
    [w,\sigma]_{\Phi,\Psi} \coloneqq \sup_{\hat{K} \in \tents} \frac{\langle w\rangle_{\hat{K}}}{\langle w^{1/2} \rangle_{\Phi,\hat{K}}} \frac{\langle \sigma\rangle_{\hat{K}}}{\langle \sigma^{1/2} \rangle_{\Psi,\hat{K}}} 
  \end{equation*}
  where $\langle \,\cdot \,\rangle_{\Phi,\hat{K}}$ denotes the Orlicz average on the dyadic tent $\hat{K}$ with respect to $\Phi$.
  See \cref{subsec:Orlicz} for the precise definitions
  of Orlicz average and the associated maximal function.
\end{define}
We have the following result.
\begin{teo}\label{teo:Bumps_for_Bergman}             %
  Let $\sigma, w$ be two weights %
  on the unit ball $\mathbb{B}^d$ in $\mathbb{C}^d$ and let $\Phi, \Psi$ be two Young functions 
  such that the associated maximal function defined in \eqref{eq:maximal_function_Orlicz} is bounded on $L^2$.
  Then the Bergman projection $P$ on $L^2(\mathbb{B}^d)$ satisfies the following bound
  \begin{equation*}%
    \lVert P(\sigma \,\cdot) \rVert_{L^2(\sigma) \to L^2(w)} \le C \, [w,\sigma]_{\Phi,\Psi}
  \end{equation*}
  where $C$ is a positive constant  independent of $\sigma,w$.
\end{teo}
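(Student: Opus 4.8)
The plan is to pass to a positive dyadic model and then close the estimate with an Orlicz-bumped Carleson embedding. Since $P(\sigma f)$ is holomorphic, $\norm{P(\sigma f)}_{A^2(w)} = \norm{P(\sigma f)}_{L^2(w)}$, so by duality it is enough to control $|\langle P(\sigma f),\, g\,w\rangle|$ for nonnegative $f\in L^2(\sigma)$ and $g\in L^2(w)$, where $\langle\,\cdot\,,\cdot\,\rangle$ is the unweighted $L^2(\D\nu)$ pairing. Using the sparse domination of the Bergman projection available from the dyadic framework of \cref{sec:dyadic_structure_ball}, I would fix a sparse family $\mathcal S\subseteq\tents$ for which
\[ |\langle P(\sigma f),\, g\,w\rangle| \lesssim \sum_{\hat K\in\mathcal S} \langle \sigma f\rangle_{\hat K}\,\langle w g\rangle_{\hat K}\,|\hat K| =: \Lambda(f,g), \]
with $\langle\,\cdot\,\rangle_{\hat K}$ the $\nu$-average over the tent $\hat K$. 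It then suffices to prove $\Lambda(f,g)\lesssim [w,\sigma]_{\Phi,\Psi}\,\norm{f}_{L^2(\sigma)}\norm{g}_{L^2(w)}$.

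Next I would factor the averages as $\langle\sigma f\rangle_{\hat K} = \langle\sigma\rangle_{\hat K}\langle f\rangle^{\sigma}_{\hat K}$ and $\langle w g\rangle_{\hat K} = \langle w\rangle_{\hat K}\langle g\rangle^{w}_{\hat K}$, where $\langle f\rangle^{\sigma}_{\hat K} = \sigma(\hat K)^{-1}\int_{\hat K} f\,\sigma\,\D\nu$. The definition of the joint Orlicz bump gives, for each tent, the inequality $\langle\sigma\rangle_{\hat K}\langle w\rangle_{\hat K}\le [w,\sigma]_{\Phi,\Psi}\,\langle\sigma^{1/2}\rangle_{\Psi,\hat K}\,\langle w^{1/2}\rangle_{\Phi,\hat K}$. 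Inserting this and applying Cauchy--Schwarz in $\hat K$ separates the form as
\[ \Lambda(f,g)\le [w,\sigma]_{\Phi,\Psi}\, \Big(\sum_{\hat K} \langle\sigma^{1/2}\rangle_{\Psi,\hat K}^{2}\,(\langle f\rangle^{\sigma}_{\hat K})^{2}\,|\hat K|\Big)^{1/2} \Big(\sum_{\hat K} \langle w^{1/2}\rangle_{\Phi,\hat K}^{2}\,(\langle g\rangle^{w}_{\hat K})^{2}\,|\hat K|\Big)^{1/2}. \]

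The core step is to bound each factor by the relevant weighted norm. I would show that the coefficients $a_{\hat K} := \langle\sigma^{1/2}\rangle_{\Psi,\hat K}^{2}\,|\hat K|$ form a $\sigma$-Carleson sequence, that is $\sum_{\hat K\subseteq\hat R} a_{\hat K}\lesssim\sigma(\hat R)$ for every tent $\hat R$. Indeed, for $\hat K\subseteq\hat R$ one has $\langle\sigma^{1/2}\rangle_{\Psi,\hat K} = \langle\sigma^{1/2}\1_{\hat R}\rangle_{\Psi,\hat K}\le M_{\Psi}(\sigma^{1/2}\1_{\hat R})$ on $\hat K$; combining this with the sparseness bound $|\hat K|\lesssim|E_{\hat K}|$ for the pairwise disjoint major subsets $E_{\hat K}\subseteq\hat K$ gives
\[ \sum_{\hat K\subseteq\hat R} a_{\hat K}\lesssim \sum_{\hat K\subseteq\hat R}\int_{E_{\hat K}} \big(M_{\Psi}(\sigma^{1/2}\1_{\hat R})\big)^{2}\,\D\nu \le \int_{\hat R}\big(M_{\Psi}(\sigma^{1/2}\1_{\hat R})\big)^{2}\,\D\nu \lesssim \int_{\hat R}\sigma\,\D\nu = \sigma(\hat R), \]
where the final inequality is exactly the assumed $L^2(\D\nu)$-boundedness of the Orlicz maximal operator $M_{\Psi}$. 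The weighted Carleson embedding theorem then yields $\sum_{\hat K}(\langle f\rangle^{\sigma}_{\hat K})^{2}\,a_{\hat K}\lesssim\norm{f}_{L^2(\sigma)}^{2}$, and the symmetric argument with $M_{\Phi}$ controls the $w$-factor by $\norm{g}_{L^2(w)}^{2}$. Plugging these into the display above closes the estimate.

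The step I expect to be the genuine obstacle is this Carleson-sequence estimate: it is the only place where the hypothesis on $M_{\Phi}$ and $M_{\Psi}$ is consumed, and verifying it requires the Orlicz average, the sparseness geometry of the tents, and the $L^2$-bound for the maximal operator to fit together cleanly. The remaining ingredients---holomorphy to pass to $L^2(w)$, sparse domination of $P$, and the weighted Carleson embedding---are either supplied by the dyadic apparatus developed earlier or are standard, so the argument reduces, as is typical for separated-bump results, to the single embedding inequality above.
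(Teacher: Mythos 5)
Your proof is correct, and it reorganizes the argument in a way genuinely different from the paper's, so a comparison is worthwhile. Both arguments share the same skeleton: pass to the sparse bilinear form $\sum_{\hat{K}}\langle \sigma f\rangle_{\hat{K}}\langle w g\rangle_{\hat{K}}\lvert\hat{K}\rvert$ via \cref{lemma:control_P_by_sparse} and duality, insert the bump using the definition of $[w,\sigma]_{\Phi,\Psi}$, and consume the $L^2$-boundedness of $M_\Phi, M_\Psi$ in exactly one place, namely the Carleson-type estimate $\sum_{\hat{K}\subseteq\hat{R}}\langle\sigma^{1/2}\rangle_{\Psi,\hat{K}}^2\lvert\hat{K}\rvert\lesssim\sigma(\hat{R})$, which you verify by the same sparseness-plus-maximal-function argument as \cref{lemma:aux2}; your diagnosis that this is where the hypothesis on the Young functions is spent matches the paper. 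Where you diverge is the handling of the weighted averages $\langle f\rangle^{\sigma}_{\hat{K}}$ and $\langle g\rangle^{w}_{\hat{K}}$: the paper runs an explicit ``parallel corona'' decomposition (principal cubes $\mathscr{F}$ for $(f,\sigma)$ and $\mathscr{G}$ for $(g,w)$, partition of the sparse sum by the projections $\pi(Q)=(F,G)$, Cauchy--Schwarz block by block, and finally \cref{lemma:aux1} for the sums over the stopping families), whereas you apply Cauchy--Schwarz globally, immediately after inserting the bump, and then quote the weighted dyadic Carleson embedding theorem once per factor, feeding it your Carleson sequence $a_{\hat{K}}=\langle\sigma^{1/2}\rangle_{\Psi,\hat{K}}^2\lvert\hat{K}\rvert$. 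The two routes are equivalent in substance --- the standard proof of the weighted Carleson embedding theorem \emph{is} the principal-cubes argument that the paper writes out --- so yours is shorter and more modular at the price of invoking that theorem as a black box, while the paper's is self-contained.

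One point you should make explicit in a write-up: both the Carleson embedding theorem and the corona construction require that the tents containing a given tent form a chain, i.e.\ that the collection is a tree (or a finite union of trees) under inclusion; for a completely general sparse family of sets this can fail. It holds here because $\tents$ is a finite union of collections coming from Bergman trees, within each of which two dyadic tents are either nested or disjoint, so you apply the embedding tree by tree and sum the finitely many contributions. The paper is equally implicit about this, so it is a remark to add, not a gap in your argument.
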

The condition in \cref{teo:Bumps_for_Bergman} is known as bump condition,
as the averages of the weights have been ``bumped up'' in the scale of Orlicz spaces.
\cref{teo:Bumps_for_Bergman} is deduced from a sparse operator dominating $P$.
In particular, %
it follows by combining the domination in \cite{RTW17} with the known estimates for sparse forms in \cite{Kangwei2weight}.
Nevertheless, to the best of our knowledge, these estimates
have not appeared in the context of Bergman spaces on the ball.
Previous results involving fractional Bergman operators on the upper-half plane can be found in \cite{Sehba18}.
Recently Sehba also deduced two-weight estimates for the Bergman projection on the upper-half plane $\mathbb{R}^2_+$
in terms of Sawyer testing conditions as the one in \cref{subsec:Sawyer_testing_sparse} for sparse operators,
see \cite[Theorem 2]{Sehba21}.

\begin{remark}
  Fundamental to our approach is the possibility to approximate
  Carleson tents with dyadic tents.
  As in \cite{RTW17}, we exploit the available dyadic structure on $\mathbb{B}^d$ developed by Arcozzi,  Rochberg, and Sawyer in \cite{ARS02}.
  We explain how this structure is constructed in \cref{sec:dyadic_structure_ball}.

  The construction of the dyadic family is independent of the two weights.
  If one could construct a family of subsets which is sparse
  with respect to the given weights, one could dispense of the $B_\infty$ condition in \cref{teo:mixed-bounds-Bergman}.
  Indeed, under this assumption the joint Berezin condition is a necessary and sufficient condition
  for the two-weight boundedness of $P$, see \cite{FangWang2015}.
  
    It is possible to construct
    a similar dyadic structure also on convex domains of finite type via the dyadic flow tents \cite{Dyadicflowtent20}.  
    This generalises the construction in \cref{sec:dyadic_structure_ball} for the ball.
    The resulting collection of dyadic flow tents is sparse, %
    and it produces %
    weighted estimates for the Bergman projection on convex domains of finite type.
    Since our bump condition implies the boundedness of a sparse operator,
    the same condition implies the boundedness of the Bergman projection on convex domains of finite type
    by the pointwise control in \cite[Lemma 4.1]{Dyadicflowtent20}.
    Similar results on classes of pseudoconvex domains can also be found in \cite{MR4263006}.
\end{remark}

\section{Preliminaries}
We recall a few classical definitions and we introduce some notations.
For two positive quantities $X$ and $Y$,
we write $X \lesssim Y$ if there exists a constant $C > 0$ such that $X \le C \, Y$.
In particular, this constant is independent of the weights.
We write $X \eqsim Y$ is both $X \lesssim Y$ and $Y \lesssim X$ holds, possibly with different constants.

Given a Borel set $E \subset \mathbb{B}^d$ and a locally integrable function $f$,
we denote the average of $f$ on $E$ with respect to the measure $\D{\nu}$ by
\begin{equation*}
  \langle f \rangle_{E} \coloneqq \frac{1}{\nu( E )} \int_E f \D{\nu} .
\end{equation*}

We remind the reader that, although some of the results below
are for general $L^p$ spaces, only $L^2$ results are needed for our argument.

\subsection{Classical Békollè--Bonami weights}\label{subsec:BBweights}
We recall the definition of a Békollè--Bonami weight on the unit ball.
These weights satisfy an $A_p$ condition
where the role of cubes is played by Carleson tents.

\begin{define}[Carleson tent on the unit ball]
Given a point $z \in \mathbb{B}^d \setminus \{ 0 \}$, consider the following set %
\begin{equation*}\textstyle
  T_z := \left\{ \zeta \in \mathbb{B}^d \; \colon \; \lvert 1 - \langle \zeta , \frac{z}{\lvert z\rvert} \rangle\rvert \le 1 - \lvert z\rvert  \right\} .
\end{equation*}
When $d = 1$ the set $T_z$ is the intersection of $\mathbb{B}^1$ with the disc centred at $z/\lvert z\rvert$ with radius $1 - \lvert z \rvert$,
whose boundary contains the point $z$.
When $z = 0$, we set $T_0 = \mathbb{B}^d$. 
\end{define}

\begin{define}[Békollè--Bonami weights]
  Given $p \in (1,\infty)$ and two weights $w,\sigma$ on $\mathbb{B}^d$,
  we define their joint $\mathcal{B}_p$ characteristic:
  \begin{equation*}             %
    [w,\sigma]_{\mathcal{B}_p} \coloneqq \sup_{z \in \mathbb{B}^d} \langle w \rangle_{T_z} \langle \sigma \rangle_{T_z}^{p-1} 
  \end{equation*}
  where $\langle w \rangle_{T_z} \coloneqq \lvert T_z \rvert^{-1} \int_{T_z} w $, and $\lvert T_z \rvert$ denotes the volume of the tent $T_z$.
  As before, we denote by $[w]_{\mathcal{B}_p} \coloneqq [w,w^{1-p'}]_{\mathcal{B}_p}$.
  We say that $w \in \mathcal{B}_p$ if $[w]_{\mathcal{B}_p}$ is finite.
\end{define}
\begin{remark}
  This classical definition  is quantitatively equivalent to the dyadic one in \cref{def:dyadicB_p}, see \cref{subsec:comparison}.
\end{remark}

\subsection{Orlicz average}\label{subsec:Orlicz}
We recall the definition  of Orlicz averages used in the statement of \cref{teo:Bumps_for_Bergman}.
We first need to introduce Young functions.
\begin{define}[Young function]
  Let $\Phi \colon [0,\infty) \to [0,\infty)$ be a continuous, convex,
  strictly increasing function such that
  \begin{equation*}
    \Phi(0) = 0 \quad \text{ and } \quad \lim_{t \to +\infty}\frac{\Phi(t)}t = + \infty .
  \end{equation*}
\end{define}

Given a set $Q$ and a Young function $\Phi$ we define
the Orlicz average via the Luxembourg norm
\begin{equation*}
  \langle f \rangle_{\Phi,Q} \coloneqq \inf \{ \lambda > 0 \, : \, \langle \Phi(f/\lambda) \rangle_Q \le 1 \}.
\end{equation*}

For example, when $\Phi(t) = t^p$ with $1 < p < \infty$
the Orlicz average corresponds to the usual $L^p$ average on the set $Q$.
Given a Young function $\Phi$ and a collection of sets,
one can consider the maximal function
\begin{equation}\label{eq:maximal_function_Orlicz}
  M_\Phi f \coloneqq \sup_{Q} \langle \lvert f\rvert \rangle_{\Phi,Q} \1_Q 
\end{equation}
where the supremum is taken over all sets in the collection.
In the context of this paper, the sets $Q$ are the dyadic tents $\hat{K}$
whose construction can be found in \cref{sec:dyadic_structure_ball}.

In \cite[Theorem 1.7]{Perez1995} Pérez characterised the Young functions for which 
the associated maximal function is bounded on $L^p$.
\begin{theorem*}{B}[Pérez 1995]\label{teo:Perez95}
  Given a Young function $\Phi$,
  the associated maximal function $M_{\Phi}$ maps $L^p$ to $L^p$, for $1<p<\infty$, if and only if
  \begin{equation}
    \label{eq:Bp_condition}\tag{$\mathscr{B}_p$}
    \int_1^{\infty} \frac{\Phi(t)}{t^p} \frac{\D{t}}t < + \infty .
  \end{equation}
\end{theorem*}

Note that the operator $M_\Phi$ is also bounded on $L^\infty$ \cite[Lemma 3.2]{Anderson2015}. %
We say that a Young function $\Phi$ belongs to $\mathscr{B}_p$ if the condition \eqref{eq:Bp_condition} holds.
We shall not confuse the class of weights $B_p$ and $\mathcal{B}_p$ with the one of Young functions in $\mathscr{B}_p$.
To help the reader, we will always denote Young functions by capital Greek letters ($\Phi$ or $\Psi$),
whilst we keep the lower case notation $w,v,u, \sigma$ for weights.

The proof of \cref{teo:mixed-bounds-Bergman} exploits testing conditions for sparse operators,
which we now introduce.
\subsection{Sparse families and sparse operators}
We start by recalling the definition of a sparse family.
\begin{define}[Sparse collection]\label{def:sparse}
  A collection of sets $\mathscr{S}$ is $\frac{1}{\tau}$-sparse, for $\tau \ge 1$, if for
  any $Q \in \mathscr{S}$ there exists a subset $E_Q \subseteq Q$ such
  that $\{ E_Q \}_{Q\in\mathscr{S}}$ are pairwise disjoint and $\lvert Q \rvert \le \tau \lvert E_Q \rvert$.
\end{define}

\begin{remark}
  The definition above uses the Lebesgue measure,
  and it can be applied to other measures as well,
  although these notions are not,
  in general, equivalent.

  Nevertheless, if $\mathscr{S}$ is a sparse collection with respect to the measure $\D{\nu}$,
  and $\sigma$ is a $B_p$ weight,
  then $\mathscr{S}$ is also sparse with respect to the measure $\sigma \D{\nu}$.
  See \cref{rmk:equivalent-sparse}.
\end{remark}
Let $\mathscr{S}$ be a sparse collection.
We denote by $\Lambda_{\mathscr{S}}$ the corresponding sparse operator
\begin{equation}\label{eq:sparse_operator}
  \Lambda_{\mathscr{S}}f \coloneqq \sum_{Q \in \mathscr{S}} \langle f \rangle_{Q} \1_Q .
\end{equation}

\subsection{Sawyer testing conditions}\label{subsec:Sawyer_testing_sparse}

The weights for which $\Lambda_{\mathscr{S}} (\sigma \,\cdot)\colon L^p(\sigma) \to L^p(w)$ holds,
as well as the equivalent dual formulation $\Lambda_{\mathscr{S}} (\cdot \, w)\colon L^{p'}(w) \to L^{p'}(\sigma)$,
have been characterised by Sawyer in terms of following testing conditions: 

\begin{equation}
  \begin{aligned}
    \lVert \Lambda_{\mathscr{S}}(\sigma \1_Q) \rVert_{L^p(w)}^p & \le \testC \sigma(Q) \, , \quad \forall\, Q \in \mathscr{S} \\
    \lVert \Lambda_{\mathscr{S}}(w \1_Q) \rVert_{L^{p'}(\sigma)}^{p'} & \le \testC' w(Q)  \, , \quad \forall\, Q \in \mathscr{S} %
  \end{aligned}\label{eq:testing_condition_on_sparse}
\end{equation}
where the optimal testing constants are the finite quantities %
\begin{equation}
  \begin{aligned}
    \testC & \coloneqq \testC_p(w,\sigma) \coloneqq \sup_Q \frac{\lVert \1_Q \Lambda_{\mathscr{S}}(\1_Q \sigma) \rVert_{L^p(w)}^p}{\sigma(Q)} \, , \\
    \testC' & \coloneqq \testC'_{p'}(w,\sigma) \coloneqq \sup_Q \frac{\lVert \1_Q \Lambda_{\mathscr{S}}(\1_Q w) \rVert_{L^{p'}(\sigma)}^{p'}}{w(Q)} .
  \end{aligned}\label{eq:testing_constants}
\end{equation}

These conditions are named after Sawyer,
who first derived them for maximal operators \cite{Sawyer82}
and for fractional and Poisson integrals \cite{Sawyer88}.
For sparse operators they have been proved in \cite{LSUT09}.

Testing constants for off-diagonal estimates
$\Lambda_{\mathscr{S}} (\sigma \,\cdot)\colon L^p(\sigma) \to L^q(w)$ for $q \neq p$
and more general sparse forms have also been studied, see \cite[Theorem 1.1]{Kangwei2weight}.
In particular we have
\begin{equation*}
  \lVert \Lambda_{\mathscr{S}}(\sigma \, \cdot)\rVert_{L^p(\sigma) \to L^p(w)} \eqsim \big( \testC^{1/p} + (\testC')^{1/p'} \big) .
\end{equation*}

In the proof of \cref{teo:mixed-bounds-Bergman} we estimate the constants $\testC, \testC'$ from above
with the Békollè--Bonami characteristic of the weights $w,\sigma$.

\subsection{Program to deduce weighted estimates}\label{subsec:plan}
A possible route to prove weighted estimates for $P$ follows these steps, see also \cite{Lerner2013}.
\begin{enumerate}
\item \textit{(Control by a positive operator).} The modulus of the Bergman projection is controlled by 
  the maximal Bergman operator:
  \begin{equation*}
    P^+f(z) \coloneqq \int_{\mathbb{B}^d} \frac{f(\zeta)}{\lvert 1 - z\bar{\zeta}\rvert^{d+1}} \D{\nu}(\zeta).
  \end{equation*}
  Namely we have $\lvert Pf(z) \rvert \le P^+\lvert f\rvert(z)$. 
  Note that $P^+(\lvert\, \cdot\, \rvert)$ is a real-valued, positive operator. 
  For positive weights $w,v$ we have
  \begin{equation*}
    \lVert P \rVert_{L^2(w) \to L^2(v)} \le \lVert P^+(\lvert\, \cdot\, \rvert) \rVert_{L^2(w) \to L^2(v)}.
  \end{equation*}

\item \textit{(Equivalence with a sparse operator).}
  Once the dyadic structure on $\mathbb{B}^d$ is constructed (see \cref{sec:dyadic_structure_ball}), 
  the collection of dyadic tents $\mathcalboondox{T}$ is sparse, see \cref{lemma:sparsity_of_dyadic_tents}.
  The associated sparse operator $\Lambda_{\tents}$ is equivalent to the
  maximal Bergman operator:
  \begin{equation*}
    P^+\lvert f\rvert(z) \eqsim_d \Lambda_{\tents}\lvert f \rvert(z)  \coloneqq \sum_{\hat{K}_\alpha \in \tents} \langle \lvert f \rvert \rangle_{\hat{K}_\alpha} \1_{\hat{K}_\alpha}.
  \end{equation*}
  See \cref{lemma:control_P_by_sparse}, and \cite[Lemma 5]{RahmWick17} for a proof.
  
\item \textit{(Bumps for the sparse operator).} Two-weight estimates for sparse operators are well understood.
  For example, they are equivalent to two-weight estimates for the maximal operator $M$, see \cite[Theorem 1.2]{Lerner2013}.
  Sufficient conditions on $(w,v)$ for the boundedness of
  \begin{equation*}
    \lVert M \rVert_{ L^2(w) \to L^2(v) } \quad \text{ and } \quad \lVert \Lambda_{\tents} \rVert_{ L^2(w) \to L^2(v) } 
  \end{equation*}
  are known in terms of the testing conditions presented in \cref{subsec:Sawyer_testing_sparse}. %
\end{enumerate}

The task of characterising weights $w,v$
for which $\lVert P \rVert_{L^2(w) \to L^2(v)}$ is finite is still open.

\subsection{Dyadic structure on the complex unit ball}\label{sec:dyadic_structure_ball}
We borrow the dyadic structure on the ball developed by
Arcozzi,  Rochberg, and Sawyer \cite[\S 2.2]{ARS02} and also used in \cite[\S 2]{RTW17}.
This structure introduces a collection of sets called ``dyadic kubes'',
which comes with a tree structure $\treestructure$ called Bergman tree
(namely a collection of partially ordered indexes $\{ \alpha \in \treestructure \}$.
The points $\{c_\alpha\}_{\alpha \in \treestructure}$ are the centres of the dyadic kubes).

We explain how the dyadic structure is constructed.

Let $\varphi_z$ be the bi-holomorphic involution of the ball exchanging $z$ and the origin:
\begin{equation*}%
  \varphi_z(w) \coloneqq \frac{z - \langle w, \frac{z}{\lvert z\rvert } \rangle\frac{z}{\lvert z\rvert } - \sqrt{1 -\lvert z \rvert^2} (w - \langle w, \frac{z}{\lvert z\rvert } \rangle\frac{z}{\lvert z\rvert } )}{1 - \langle w, z\rangle}.
\end{equation*}
The Bergman metric on the unit ball $\mathbb{B}^d$ is defined as
\begin{equation*}
  \beta(z,w) \coloneqq \frac12 \log \frac{1 + \lvert \varphi_z(w) \rvert}{1 - \lvert \varphi_z(w) \rvert}.
\end{equation*}
In the following, $B(z_0,r) \subset \mathbb{B}^d$ denotes the ball of centre $z_0$ and radius $r$ in the Bergman metric.
We also denote by $\mathbb{S}_{r}$ the sphere of radius $r$ centred at the origin, so $\mathbb{S}_r = \partial B(0,r)$.

Fix $R, \delta > 0$.
For $n \in \mathbb{N}$, there is a collection of points $\{ z_j^n \}_{j =1}^{J_n}$
and a partition of the sphere $\mathbb{S}_{n R}$ in Borel subsets $\{\Omega_j^n\}_{j=1}^{J_n}$ 
such that 
\begin{enumerate}
\item[(i)] $ \mathbb{S}_{n R} = \bigsqcup_{j = 1}^{J_n} \Omega_j^n $ ;
\item[(ii)] $ \big( B(z_j,\delta) \cap \mathbb{S}_{n R} \big) \subseteq \Omega_j^n \subseteq   \big( B(z_j, C \delta) \cap \mathbb{S}_{n R} \big)$ for some $C>0$.
\end{enumerate}

Let $\pi_{n R}$ denote the radial projection from $\mathbb{B}^d$ onto the sphere $\mathbb{S}_{n R}$.
The kubes are given by
\begin{align*}
  K_1^0 & \coloneqq B(0,R) , \\
  K_j^n & \coloneqq \{ \zeta \in B(0,(n+1)R) \setminus B(0,n R) \, \colon\, \pi_{n R}(\zeta) \in \Omega_j^n \}. 
\end{align*}
The centre of the kube $K_j^n$ is $c_j^n \coloneqq \pi_{(n+\frac12)R}(z_j^n)$.
We say that a point $c_i^{n+1}$ is a child of $c_k^n$ if $\pi_{n R}(c_i^{n+1}) \in \Omega_k^n$.
Then the centres form a tree structure $\treestructure$,
which we will refer to as Bergman tree.

\begin{figure}[h]
  \centering
  \def\svgwidth{.6\textwidth}
  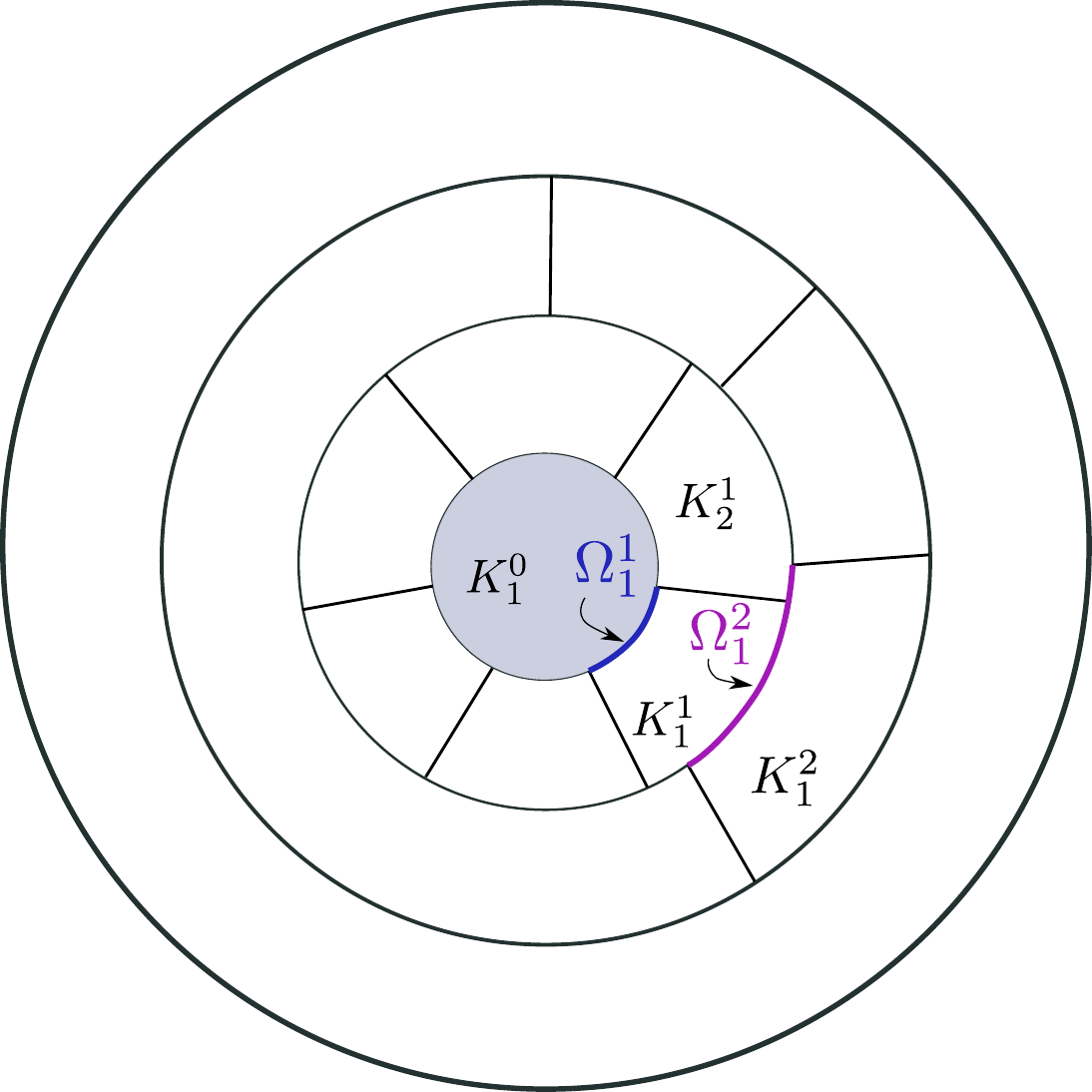  
  \caption{An example of the first generations of kubes and the respective $\Omega^n_k$ in the dyadic structure on $\mathbb{B}^1$.
  The structure for $\mathbb{B}^d$ is of a similar spirit.} \label{fig:kubes}
\end{figure}

To simplify the notation, 
let $\alpha$  be an element in $\treestructure$.
We denote by $K_{\alpha}$ the unique kube with centre $\alpha$.
If $\beta$ is a descendant of $\alpha$ we write $\beta \ge \alpha$.
Given a kube $K_\alpha$, the dyadic tent $\hat{K}_\alpha$ is the union of all kubes whose centres are descendant of $\alpha$ in $\treestructure$, namely
\begin{equation*}
  \hat{K}_\alpha \coloneqq \bigcup_{\beta \ge \alpha} K_\beta .
\end{equation*}

The volume of $K_\alpha$ and $\widehat{K}_{\alpha}$ are comparable.
This was originally proved in \cite[Lemma 2.8]{ARS06}, see also \cite[Lemma 1]{RTW17}.
\begin{lemma}[Arcozzi, Rochberg, and Sawyer 2006]\label{lemma:sparsity_of_dyadic_tents}
  Let $\treestructure$ be a Bergman tree on $\mathbb{B}^d$ with parameters $R,\delta$.
  There is a universal constant $\tau > 1$, depending only on $R, \delta$ and the dimension $d$,
  such that $\lvert \widehat{K}_\alpha \rvert \le \tau\lvert K_\alpha \rvert$ for all $\alpha \in \treestructure$.
\end{lemma}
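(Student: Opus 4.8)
The plan is to exploit the self-similar decomposition of a dyadic tent across the generations of the Bergman tree and to reduce the statement to the volume asymptotics of a single kube. Write $\epsilon_\alpha := 1 - |c_\alpha|$. The first quantitative input I would establish is the two-sided estimate
\[
  |K_\alpha| \eqsim_{R,\delta,d} \epsilon_\alpha^{\,d+1}, \qquad \alpha \in \treestructure .
\]
The upper bound follows because, by property (ii) of the construction, the radial shadow of $K_\alpha$ on $\partial\mathbb{B}^d$ is contained in a Kor\'anyi ball of radius $\eqsim \sqrt{\epsilon_\alpha}$ for the quasi-metric $|1 - \langle\zeta,\eta\rangle|^{1/2}$, while $K_\alpha$ has radial (normal) extent $\eqsim \epsilon_\alpha$; computing the normalised volume in the non-isotropic coordinates adapted to the boundary then gives $|K_\alpha| \lesssim \epsilon_\alpha^{\,d+1}$, exactly as for the Carleson tent $T_{c_\alpha}$. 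For the matching lower bound I would observe that $K_\alpha$ contains a Bergman ball of radius $\eqsim \min(R,\delta)$ centred at $c_\alpha$ (there is radial room $\sim R/2$ since $c_\alpha = \pi_{(n+\frac12)R}(z_j^n)$, and angular room $\sim \delta$ by property (ii)), and that the M\"obius-invariant measure $(1-|z|^2)^{-(d+1)}\,\D\nu$ assigns fixed mass to fixed-radius Bergman balls, whence $\nu(\text{ball}) \eqsim (1-|z|^2)^{d+1} \eqsim \epsilon_\alpha^{\,d+1}$.

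Granting this, I would slice the tent by generations. If $\alpha$ lies in generation $n$, then
\[
  \widehat{K}_\alpha = \bigsqcup_{m \ge 0} \Big( \widehat{K}_\alpha \cap \{\, (n+m)R \le \beta(0,\cdot) < (n+m+1)R \,\} \Big),
\]
and the depth-$m$ block is a disjoint union of kubes $K_\beta$ with $\beta \ge \alpha$ in generation $n+m$, hence at height $\epsilon_\beta \eqsim \epsilon_\alpha\, e^{-2mR}$ (using $1-|z| \eqsim e^{-2\beta(0,z)}$). The second geometric input is that the radial shadows of all descendants of $\alpha$ remain inside a single Kor\'anyi ball of radius $\eqsim \sqrt{\epsilon_\alpha}$ on $\partial\mathbb{B}^d$: this is where the tree relation ``$c_i^{n+1}$ is a child of $c_k^n$ iff $\pi_{nR}(c_i^{n+1}) \in \Omega_k^n$'' is used, together with the doubling of the Kor\'anyi quasi-metric to absorb the bounded spreading caused by the constant $C$ in property (ii). Since a generation-$(n+m)$ kube has shadow of Kor\'anyi measure $\eqsim \epsilon_\beta^{\,d}$ and these shadows are essentially disjoint inside a ball of measure $\eqsim \epsilon_\alpha^{\,d}$, the number of such descendants of $\alpha$ is $\lesssim (\epsilon_\alpha/\epsilon_\beta)^d \eqsim e^{2mRd}$.

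Combining the count with the per-kube volume estimate, the depth-$m$ block has volume
\[
  \lesssim e^{2mRd}\, \epsilon_\beta^{\,d+1} \eqsim e^{2mRd}\,\big(\epsilon_\alpha\, e^{-2mR}\big)^{d+1} = \epsilon_\alpha^{\,d+1}\, e^{-2mR}.
\]
Summing the geometric series in $m$ yields $|\widehat{K}_\alpha| \lesssim \epsilon_\alpha^{\,d+1}/(1 - e^{-2R}) \eqsim |K_\alpha|$, i.e. the claimed bound with $\tau$ depending only on $R,\delta,d$ (and $\tau>1$ since $K_\alpha \subseteq \widehat{K}_\alpha$). Equivalently, one may phrase the argument recursively: from $|\widehat{K}_\alpha| = |K_\alpha| + \sum_{\beta \in \ch(\alpha)} |\widehat{K}_\beta|$ the geometric decay shows the top kube occupies a fixed proportion $1-\theta$ of the tent, so $\tau = (1-\theta)^{-1}$.

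The main obstacle is the non-isotropic bookkeeping concentrated in the two geometric inputs: the per-kube volume estimate and the containment of all descendant shadows in one Kor\'anyi ball. Both require translating fixed-radius Bergman balls on the spheres $\mathbb{S}_{nR}$ into Kor\'anyi balls on $\partial\mathbb{B}^d$ (extent $\sqrt{\epsilon}$ in the $2d-1$ boundary directions, thickness $\epsilon$ in the normal direction) and controlling how the constant $C$ in property (ii) and the child relation interact across scales; once these are in place the geometric summation is routine. This is precisely the content of \cite[Lemma 2.8]{ARS06}.
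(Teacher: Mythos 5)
Your argument is correct, but there is nothing in the paper to compare it against line by line: the paper does not prove this lemma, it imports it, deferring the proof to \cite[Lemma 2.8]{ARS06} (see also \cite[Lemma 1]{RTW17}). What you have written is in effect a sound reconstruction of that external argument: the two-sided kube volume estimate $\lvert K_\alpha \rvert \eqsim (1-\lvert c_\alpha\rvert)^{d+1}$, the localisation of all descendant shadows in a single Kor\'anyi ball of radius $\eqsim (1-\lvert c_\alpha\rvert)^{1/2}$, the packing bound of $\lesssim e^{2mRd}$ descendants at depth $m$ (valid because same-generation shadows are exactly disjoint, the $\Omega_j^{n+m}$ being a partition of $\mathbb{S}_{(n+m)R}$), and the geometric series in $m$ all check out, with $\tau$ visibly depending only on $R,\delta,d$. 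Two refinements are worth recording. First, the mechanism that keeps the shadow-containment constant from degrading across $m$ generations is not the \emph{doubling} of the Kor\'anyi quasi-metric: doubling is a property of the measure, and with only a quasi-triangle inequality of constant $A$ the naive iteration compounds like $\sum_k A^k e^{-kR}$, which blows up when $A>e^{R}$. What saves you is that $d(\zeta,\eta)=\lvert 1-\langle \zeta,\eta\rangle\rvert^{1/2}$ satisfies a genuine triangle inequality on the closed ball (a classical fact, in Rudin's book on the unit ball), so the per-generation drifts $\lesssim \sqrt{1-\lvert c_\alpha\rvert}\,e^{-kR}$ sum as an honest geometric series. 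Second, once that localisation step is in place, your per-generation count is dispensable: every descendant kube sits at depth $1-\lvert z\rvert \lesssim 1-\lvert c_\alpha\rvert$ over the same Kor\'anyi ball, so the whole tent $\widehat{K}_\alpha$ is contained in a single Carleson tent of volume $\eqsim (1-\lvert c_\alpha\rvert)^{d+1} \eqsim \lvert K_\alpha\rvert$; this shortcut is precisely the Huo--Wick containment that the paper records in \cref{subsec:comparison}, and it yields the sparseness bound immediately. Either way, the conclusion and the stated dependence of $\tau$ on $R,\delta,d$ are as claimed.
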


From this lemma,
and from the fact that the kubes $\{ K_\alpha \}_{\alpha \in \treestructure}$ are pairwise disjoint,
it follows immediately that the collection of dyadic tents $\tents \coloneqq \{ \hat{K}_{\alpha}\}_{\alpha \in \treestructure}$
is $\frac{1}{\tau}$-sparse, in the sense of \cref{def:sparse}.

\begin{remark}\label{rmk:equivalent-sparse}
  Note that if $\sigma \in B_p$, by Hölder's inequality
  the collection $\tents$ is $(\tau^p [\sigma]_{B_p})^{-1}$-sparse with respect to the measure $\sigma \D{\nu}$.
\end{remark}

The characteristic $[\sigma]_{B_\infty}$ is controlled by $[\sigma]_{B_p}$.
We recall the proof from \cite[Proposition 5.6]{APR17}.

\begin{prop}[Aleman, Pott, Reguera 2017]\label{prop:Binfty_improvement}
  For $1< p < \infty$,
  let $w$ be a weight in $B_p$.
  Then we have
  \begin{equation*}
    [w]_{B_\infty} \le [w]_{B_p} .
  \end{equation*}
\end{prop}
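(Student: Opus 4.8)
The plan is to fix a single dyadic tent and reduce the $B_\infty$ estimate to a Carleson-type bound on the weight $w$, which in turn follows from the $B_p$ condition via Hölder's inequality on the kube sitting at the root of the tent. Since $[w]_{B_\infty}$ is a supremum over $\tents$, it suffices to bound $\int_{\hat{K}_0} M(w\1_{\hat{K}_0})\D{\nu}$ by $[w]_{B_p}\,w(\hat{K}_0)$ for each fixed tent $\hat{K}_0$. Here I would exploit the tree structure: the tents containing a given point form a chain, and $w\1_{\hat{K}_0}$ is supported in $\hat{K}_0$, so for $z\in\hat{K}_0$ one has $M(w\1_{\hat{K}_0})(z)=\sup\{\langle w\rangle_{\hat{K}_\gamma}\colon z\in\hat{K}_\gamma\subseteq\hat{K}_0\}$, where the supremum runs over the ancestors of the kube containing $z$, up to $\hat{K}_0$.

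The key step is the following \emph{Carleson estimate}: writing $\sigma=w^{1-p'}$ and letting $K_\alpha$ denote the kube at the root of the tent $\hat{K}_\alpha$ (so that the $K_\alpha$ partition $\hat{K}_0$ and $\lvert\hat{K}_\alpha\rvert\le\tau\lvert K_\alpha\rvert$ by \cref{lemma:sparsity_of_dyadic_tents}), I would show that
\begin{equation*}
  w(\hat{K}_\alpha)\le \tau^p\,[w]_{B_p}\,w(K_\alpha).
\end{equation*}
Indeed, the $B_p$ condition gives $w(\hat{K}_\alpha)\le [w]_{B_p}\lvert\hat{K}_\alpha\rvert^p\,\sigma(\hat{K}_\alpha)^{-(p-1)}$, while applying Hölder's inequality on $K_\alpha$ to the factorisation $1=w^{1/p}\sigma^{1/p'}$ yields $\lvert K_\alpha\rvert^p\le w(K_\alpha)\,\sigma(K_\alpha)^{p-1}\le w(K_\alpha)\,\sigma(\hat{K}_\alpha)^{p-1}$. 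Combining these two inequalities with $\lvert\hat{K}_\alpha\rvert\le\tau\lvert K_\alpha\rvert$ gives the claim.

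Bounding the supremum defining $M(w\1_{\hat{K}_0})$ by the sum over the chain, integrating, and exchanging the order of summation collapses the telescoping structure:
\begin{equation*}
  \int_{\hat{K}_0} M(w\1_{\hat{K}_0})\D{\nu}\le \sum_{\hat{K}_\gamma\subseteq\hat{K}_0} w(\hat{K}_\gamma)\le \tau^p\,[w]_{B_p}\sum_{\gamma} w(K_\gamma)=\tau^p\,[w]_{B_p}\,w(\hat{K}_0),
\end{equation*}
where the last equality uses that the kubes $K_\gamma$ partition $\hat{K}_0$. This already establishes $[w]_{B_\infty}\lesssim [w]_{B_p}$. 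The main obstacle is the \emph{sharp constant}: replacing the supremum by the full sum over the chain is wasteful and is precisely what introduces the comparability factor $\tau^p$. To recover the clean bound $[w]_{B_\infty}\le [w]_{B_p}$ as stated, I expect one must avoid the crude $\sup\le\sum$ step and instead run a stopping-time argument that retains only the maximising tent in each chain, summing the Carleson estimate against the disjoint roots $K_{\mathrm{top}}$ so that neither the telescoping nor the kube–tent comparison costs a constant.
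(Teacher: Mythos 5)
Your proof is correct, but it takes a genuinely different route from the paper. The paper's argument is a duality computation: writing $1=\sigma^{1/p'}\sigma^{1/p-1}$ for the dual weight $\sigma=w^{1-p'}$ and applying H\"older's inequality, it bounds $\int_{\hat{K}} M(w\1_{\hat{K}})$ by $\lVert M\rVert_{L^{p'}(\sigma)\to L^{p'}(\sigma)}\bigl(\int_{\hat{K}} w^{p'}\sigma\bigr)^{1/p'}\bigl(\int_{\hat{K}}\sigma^{1-p}\bigr)^{1/p}$, then uses the identities $w^{p'}\sigma=w=\sigma^{1-p}$ together with Buckley's sharp estimate $\lVert M\rVert_{L^{p'}(\sigma)\to L^{p'}(\sigma)}\lesssim_{p,d}[\sigma]_{B_{p'}}^{1/(p'-1)}=[w]_{B_p}$. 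You instead give a self-contained Carleson-embedding argument: the packing estimate $w(\hat{K}_\alpha)\le\tau^p[w]_{B_p}\,w(K_\alpha)$ (which you derive correctly from the $B_p$ condition plus H\"older on the root kube and \cref{lemma:sparsity_of_dyadic_tents}), the chain structure of tents in the tree, the crude bound $\sup\le\sum$, and the fact that the kubes partition the tent. What your route buys is that it is elementary and avoids weighted maximal function theory entirely, with an explicit constant $\tau^p$, and it works verbatim for any sparse tree structure; what the paper's route buys is brevity and a constant depending only on $p$ and $d$. Regarding your closing worry about sharpness: note that the paper's own proof also does \emph{not} deliver the literal constant-free inequality as stated --- its final line reads $\lesssim_{p,d}[w]_{B_p}\int_{\hat{K}} w$, so the clean bound $[w]_{B_\infty}\le[w]_{B_p}$ is established there only up to an implicit constant as well. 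Your $\tau^p$ loss therefore puts you on equal footing, and the stopping-time refinement you sketch is unnecessary for this purpose; it also could not remove the constant entirely, since the $\tau^p$ enters already in the Carleson estimate through $\lvert\hat{K}_\alpha\rvert\le\tau\lvert K_\alpha\rvert$, not only in the $\sup\le\sum$ step. One minor caveat: the chain property of tents containing a point holds within a single Bergman tree; if $\tents$ is a finite union of trees as in \cref{lemma:control_P_by_sparse}, treat each tree separately at the harmless cost of a further factor $N$.
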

\begin{proof}
  Let $w \in B_p$ and let $\sigma \coloneqq w^{1-p'}$ be the dual weight.
  By writing $1 = \sigma^{\frac1{p'}} \sigma^{\frac1{p} -1}$ and using Hölder's inequality, we have
  \begin{align*}
    \int_{\hat{K}} M(w \1_{\hat{K}} ) \sigma^{\frac1{p'}} \sigma^{\frac1{p} -1} & \le \left( \int_{\hat{K}} M(w \1_{\hat{K}})^{p'} \sigma \right)^{1/p'} \left( \int_{\hat{K}} \sigma^{1-p} \right)^{1/p} \\
                                                                            & \le \lVert M \rVert_{L^{p'}(\sigma) \to L^{p'}(\sigma)} \left( \int_{\hat{K}} w^{p'} \sigma \right)^{1/p'} \left( \int_{\hat{K}} \sigma^{1-p} \right)^{1/p} \\
                                                                            & \lesssim_{p,d} [w]_{B_p} \int_{\hat{K}} w \\    
  \end{align*}
  where we used that $w^{p'} \sigma =  w =  \sigma^{1-p} $
  together with Buckley's estimate \cite[Theorem 2.5]{BuckleyEstimates}
  for the Hardy--Littlewood maximal function:  %
  \begin{equation*}
    \lVert M \rVert_{L^{p'}(\sigma) \to L^{p'}(\sigma)} \lesssim_{p',d} [\sigma]_{B_{p'}}^{1/(p'-1)} = [w]_{B_p} .
  \end{equation*}
  A simple proof of the bound for the norm of $M$ can also be found in \cite{MR2399047}.
\end{proof}

\section{Proof of Theorem \ref{teo:mixed-bounds-Bergman}}
We start by noting that
the maximal Bergman operator $P^+$ is controlled by the sparse operator $\Lambda_{\tents}$ defined in \eqref{eq:sparse_operator}.
\begin{lemma}[{\cite[Lemma 5]{RTW17}}]\label{lemma:control_P_by_sparse}
  There exists a finite collection of Bergman trees  $\{ \treestructure_\ell \}_{\ell = 1}^N$
  such that
  \begin{equation*}
    P^+\lvert f\rvert(z) \eqsim \Lambda_{\tents} \lvert f\rvert(z) = \sum_{\hat{K}_\alpha \in \tents} \langle \lvert f \rvert \rangle_{\hat{K}_\alpha} \1_{\hat{K}_\alpha}
  \end{equation*}
  where $\tents \coloneqq \cup_{\ell = 1}^N \{ \hat{K}_\alpha \, \colon \, \alpha \in \treestructure_\ell \}$ is a sparse collection of dyadic tents.
\end{lemma}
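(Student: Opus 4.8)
The plan is first to reduce to nonnegative $f$ (the modulus in $P^+\lvert f\rvert$ lets us assume $f \ge 0$ throughout) and then to recast the size of the kernel $\lvert 1 - z\bar\zeta\rvert^{-(d+1)}$ in terms of the volumes of dyadic tents. The geometric input I would isolate first is the comparison
\begin{equation*}
  \lvert 1 - z\bar\zeta\rvert^{d+1} \eqsim_d \lvert \hat{K}_{\alpha(z,\zeta)}\rvert,
\end{equation*}
where $\alpha(z,\zeta)$ is the deepest common ancestor in $\treestructure$ of the kubes containing $z$ and $\zeta$, so that $\hat{K}_{\alpha(z,\zeta)}$ is the smallest tent of $\treestructure$ containing both points. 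This rests on two facts from the construction in \cite{ARS02}: a tent $\hat{K}_\alpha$ at generation $n$ has scale $1 - \lvert c_\alpha\rvert \eqsim e^{-2nR}$ and volume $\lvert \hat{K}_\alpha\rvert \eqsim (1-\lvert c_\alpha\rvert)^{d+1}$ (up to the angular normalisation fixed by $\delta$), and the pseudo-metric $\lvert 1 - z\bar\zeta\rvert$ is comparable to the scale at which $z$ and $\zeta$ separate in the tree. In particular the pseudo-diameter of a tent is comparable to its scale, so $\lvert 1 - z\bar\zeta\rvert \lesssim \lvert \hat{K}_\alpha\rvert^{1/(d+1)}$ whenever $z,\zeta \in \hat{K}_\alpha$.

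For the easy inequality $\Lambda_{\tents}f(z) \lesssim P^+f(z)$ I would work with a single tree. Writing out the sparse operator and interchanging the sum over the (nested, increasing) chain of tents containing $z$ with the integral gives
\begin{equation*}
  \Lambda_{\tents}f(z) = \int_{\mathbb{B}^d} f(\zeta) \sum_{\alpha \,:\, z,\zeta \in \hat{K}_\alpha} \frac{1}{\lvert \hat{K}_\alpha\rvert} \D{\nu}(\zeta).
\end{equation*}
For fixed $\zeta$ the tents containing both points are exactly the ancestors of $\alpha(z,\zeta)$, whose volumes grow geometrically by the scale relation above; the inner sum is therefore dominated by $C\lvert \hat{K}_{\alpha(z,\zeta)}\rvert^{-1} \eqsim \lvert 1 - z\bar\zeta\rvert^{-(d+1)}$, and this direction follows. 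Note that only the upper bound $\lvert 1 - z\bar\zeta\rvert \lesssim \text{scale of } \alpha(z,\zeta)$, valid in any single tree, is used here.

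The reverse inequality $P^+f(z) \lesssim \Lambda_{\tents}f(z)$ is where the finite family of trees becomes essential, and I expect it to be the main obstacle. In a single tree the matching lower bound $\lvert 1 - z\bar\zeta\rvert \gtrsim \text{scale of }\alpha(z,\zeta)$ can fail badly: if $z$ and $\zeta$ lie in distinct but physically adjacent children of $\alpha(z,\zeta)$, they may be far closer in the pseudo-metric than the scale of their common tent, so the kernel is much larger than $\lvert \hat{K}_{\alpha(z,\zeta)}\rvert^{-1}$ and cannot be absorbed into the sparse sum. This is the ball analogue of the adjacent-dyadic-grid phenomenon. The remedy, following \cite{ARS02} and \cite{RTW17}, is to choose a finite collection $\{\treestructure_\ell\}_{\ell=1}^N$ of Bergman trees (obtained by translating the centres $\{z_j^n\}$ and the angular partitions $\{\Omega_j^n\}$ by a fixed finite set of shifts, with $N = N(d,R,\delta)$) so that every pair $(z,\zeta)$ is \emph{well contained}: in at least one tree the deepest common ancestor has scale $\eqsim \lvert 1 - z\bar\zeta\rvert$. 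On that tree the comparison of the first paragraph holds in both directions, and summing up the ancestors gives $\lvert 1 - z\bar\zeta\rvert^{-(d+1)} \lesssim \sum_{\alpha \,:\, z,\zeta \in \hat{K}_\alpha}\lvert \hat{K}_\alpha\rvert^{-1}$ in that tree. Integrating against $f$ and summing over $\ell$ then yields $P^+f(z) \lesssim \sum_{\ell} \Lambda_{\tents_\ell}f(z) = \Lambda_{\tents}f(z)$ with $\tents = \bigcup_\ell \{\hat{K}_\alpha : \alpha \in \treestructure_\ell\}$. The sparsity of $\tents$ is already recorded in \cref{lemma:sparsity_of_dyadic_tents}, so the construction of the shifts and the verification of the well-containment property are the only real work and constitute the crux of the argument.
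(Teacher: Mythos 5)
Your outline is correct and follows essentially the same route as the proof of \cite[Lemma 5]{RTW17}, which the paper cites for this statement rather than reproducing: the easy direction by geometric summation over the nested chain of ancestor tents in a single tree, and the hard direction via a finite family of Bergman trees whose well-containment property is exactly the content of \cref{lemma:volume_comparison}. The only slight imprecision is that the two-sided kernel--volume comparison stated in your first paragraph holds only after selecting the right tree for the pair $(z,\zeta)$, a point you yourself correct in the third paragraph.
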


Then \cref{teo:mixed-bounds-Bergman} and \cref{teo:Bumps_for_Bergman} follow from the respective estimates for $\Lambda_{\tents}$.
In the rest of the paper we prove of these estimates for a sparse operator $\Lambda_{\mathscr{S}}$ associated to a generic sparse collection $\mathscr{S}$.

The testing conditions for the boundedness of $  \lVert \Lambda_{\mathscr{S}}(\sigma \, \cdot)\rVert_{L^p(\sigma) \to L^p(w)} $  are
\begin{align*}
  \lVert \1_{\hat{K}_0} \Lambda_{\tents}( \sigma \1_{\hat{K}_0} )  \rVert_{L^2(w)}^2 & \lesssim [w,\sigma]_{B_2} [\sigma]_{B_\infty} \sigma(\hat{K}_0) , \\
  \lVert \1_{\hat{K}_0} \Lambda_{\tents}( w \1_{\hat{K}_0} ) \rVert_{L^2(\sigma)}^2 & \lesssim [\sigma,w]_{B_2} [w]_{B_\infty} w(\hat{K}_0) .
\end{align*}
By symmetry, it is enough to prove one of the two inequalities.
We choose the first one.
\begin{prop}\label{prop:B2-Binfty-bound}
  Let $\sigma, w$ be two weights. Then
  for any dyadic tent $\hat{K}_0 \in \tents$,
  we have
  \begin{equation*}%
    \lVert \1_{\hat{K}_0} \Lambda_{\tents} \sigma \rVert_{L^2(w)}^2 \lesssim [w,\sigma]_{B_2} [\sigma]_{B_\infty} \sigma(\hat{K}_0) .
  \end{equation*}
\end{prop}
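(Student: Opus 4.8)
The plan is to reduce the square to a single summation over the Bergman tree and then to separate a pointwise $B_2$ factor from a Carleson $B_\infty$ factor. Write $a_\alpha \coloneqq \langle \sigma \rangle_{\hat K_\alpha}$, and let $\{E_\alpha\}_{\alpha}$ be the pairwise disjoint sets of \cref{def:sparse} witnessing that $\tents$ is $\frac1\tau$-sparse. Since dyadic tents are nested or disjoint, multiplying by $\1_{\hat K_0}$ leaves only tents comparable to $\hat K_0$; working with the testing quantity in \eqref{eq:testing_condition_on_sparse}, the input becomes $\sigma \1_{\hat K_0}$, so the tents $\hat K_\beta \supsetneq \hat K_0$ contribute the tail $\big(\sum_{\hat K_\beta \supsetneq \hat K_0} \tfrac{\sigma(\hat K_0)}{\lvert \hat K_\beta \rvert}\big)\1_{\hat K_0}$. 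Because consecutive tent volumes along an increasing chain grow by a fixed factor $>1$ (a consequence of \cref{lemma:sparsity_of_dyadic_tents}), this tail is at most a constant multiple of $\langle \sigma \rangle_{\hat K_0}\1_{\hat K_0}$, and its $L^2(w)$ contribution is controlled by $[w,\sigma]_{B_2}\,\sigma(\hat K_0)$ via $\langle \sigma \rangle_{\hat K_0}\langle w \rangle_{\hat K_0} \le [w,\sigma]_{B_2}$ and $[\sigma]_{B_\infty} \ge 1$. It therefore suffices to treat the local operator $T \coloneqq \sum_{\hat K_\alpha \subseteq \hat K_0} a_\alpha \1_{\hat K_\alpha}$.

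For the local part I would expand the square using that, for a fixed point $z$, the tents containing $z$ form a chain under inclusion. This gives $T(z)^2 \le 2 \sum_{\alpha} a_\alpha A_\alpha \1_{\hat K_\alpha}(z)$, where $A_\alpha \coloneqq \sum_{\hat K_\alpha \subseteq \hat K_\beta \subseteq \hat K_0} a_\beta$ is the sum of the averages over the ancestors of $\alpha$ and is constant on $\hat K_\alpha$. Integrating against $w$ yields $\lVert \1_{\hat K_0} T \rVert_{L^2(w)}^2 \le 2 \sum_\alpha a_\alpha A_\alpha\, w(\hat K_\alpha)$. The $B_2$ characteristic enters here through the pointwise bound $a_\alpha\, w(\hat K_\alpha) = \langle \sigma \rangle_{\hat K_\alpha} \langle w \rangle_{\hat K_\alpha} \lvert \hat K_\alpha \rvert \le [w,\sigma]_{B_2} \lvert \hat K_\alpha \rvert$, which reduces the problem to the Carleson estimate $\sum_\alpha A_\alpha \lvert \hat K_\alpha \rvert \lesssim [\sigma]_{B_\infty} \sigma(\hat K_0)$.

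This remaining Carleson estimate is the heart of the argument and where $B_\infty$ is used. I would first exchange the order of summation and invoke sparseness in Lebesgue measure, $\sum_{\hat K_\alpha \subseteq \hat K_\beta} \lvert \hat K_\alpha \rvert \le \tau \sum_{\hat K_\alpha \subseteq \hat K_\beta} \lvert E_\alpha \rvert \le \tau \lvert \hat K_\beta \rvert$, to obtain $\sum_\alpha A_\alpha \lvert \hat K_\alpha \rvert \le \tau \sum_{\hat K_\beta \subseteq \hat K_0} \sigma(\hat K_\beta)$. Then I would prove the $B_\infty$ Carleson embedding $\sum_{\hat K_\beta \subseteq \hat K_0} \sigma(\hat K_\beta) \le \tau\, [\sigma]_{B_\infty}\, \sigma(\hat K_0)$ by testing the maximal function against the disjoint sets $E_\beta$: on $E_\beta \subseteq \hat K_\beta \subseteq \hat K_0$ one has $M(\sigma \1_{\hat K_0}) \ge \langle \sigma \rangle_{\hat K_\beta}$, so summing over $\beta$ and using disjointness together with $\lvert \hat K_\beta \rvert \le \tau \lvert E_\beta \rvert$ gives $\tau^{-1}\sum_\beta \sigma(\hat K_\beta) \le \int_{\hat K_0} M(\sigma \1_{\hat K_0}) \D{\nu} \le [\sigma]_{B_\infty}\, \sigma(\hat K_0)$. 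Chaining the estimates produces the claimed bound with an implicit constant depending only on $\tau$.

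The main obstacle is precisely this last step: the square expansion must be arranged so that the surviving sum $\sum_\beta \sigma(\hat K_\beta)$ is absorbed by the $B_\infty$ Carleson condition, yielding the smaller factor $[\sigma]_{B_\infty}$ rather than a second copy of $[w,\sigma]_{B_2}$. Keeping the $A_2$ role (the pointwise product of averages) and the $A_\infty$ role (the Carleson packing of the $\sigma$-masses) cleanly separated is what gives the mixed estimate its strength; any cruder bound that estimates $A_\alpha$ by a supremum of averages would lose this refinement.
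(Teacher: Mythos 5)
Your proof is correct and takes essentially the same route as the paper's: expand the square along chains of nested tents to get the factor $2$ and the double sum over nested pairs, bound the product $\langle \sigma \rangle_{\hat{K}_\alpha}\langle w \rangle_{\hat{K}_\alpha} \le [w,\sigma]_{B_2}$ pointwise on the smaller tent, collapse the volume sum $\sum_{\hat{K}_\alpha \subseteq \hat{K}_\beta} \lvert \hat{K}_\alpha \rvert \le \tau \lvert \hat{K}_\beta \rvert$ by sparseness, and finish with the Carleson estimate $\sum_{\hat{K}_\beta \subseteq \hat{K}_0} \sigma(\hat{K}_\beta) \le \tau \int_{\hat{K}_0} M(\sigma \1_{\hat{K}_0}) \D{\nu} \le \tau [\sigma]_{B_\infty}\sigma(\hat{K}_0)$, exactly as in the paper. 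The only difference is your explicit treatment of the tents strictly containing $\hat{K}_0$ (via geometric growth of volumes along chains), a tail which the paper silently discards when it restricts to $L \subseteq L_0$; this is a small but genuine point of extra care, not a different method.
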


We refer the reader to \cite[Prop. 5.2]{HytLac12} for a version of this result for dyadic shifts.
Since we deal with sparse operators, the proof we present here is simpler.
It follows the approach in Hytönen's work \cite[\S 5.A]{A_2_remarks} and in \cite[\S 5]{APR17}.

\begin{proof}[Proof of \cref{prop:B2-Binfty-bound}]

  For simplicity, we denote by $L_0 \in \tents$ a fixed dyadic tent, instead of $\hat{K}_0$.
  Recall that, since $\tents$ is sparse,
  there is a fixed $\tau \ge 1$ such that
  for every $L \in \tents$ there exists a subset $E_L \subseteq L$ with the property that $\lvert L \rvert \le \tau \lvert E_L\rvert$
  and the sets in $\{E_L \, :\, L \in \tents\}$ are pairwise disjoint.
  Then we have
  \begin{align*}
    \lVert  \1_{L_0} \Lambda_{\tents} \sigma \rVert_{L^2(w)}^2 & = \int_{L_0} \Big( \sum_{L \in \tents} \langle \sigma \rangle_L \1_L \Big)^2 w \\
                                                             & \le 2 \int_{L_0} \sum_{\substack{L \in \tents \\ L \subseteq L_0}} \langle \sigma \rangle_L \sum_{\substack{L' \in \tents \\ L' \subseteq L}} \langle \sigma \rangle_{L'} \1_{L'} \, w \\
    & = 2 \sum_{\substack{L \in \tents \\ L \subseteq L_0}} \langle \sigma \rangle_L \sum_{\substack{L' \in \tents \\ L' \subseteq L}} \langle \sigma \rangle_{L'} \langle w \rangle_{L'} \lvert L' \rvert \\
    & \le 2\sup_{L' \in \tents} \langle \sigma \rangle_{L'} \langle w \rangle_{L'}  \sum_{\substack{L \in \tents \\ L \subseteq L_0}} \langle \sigma \rangle_L \sum_{\substack{L' \in \tents \\ L' \subseteq L}} \lvert L' \rvert \\
    & \le 2 \tau \sup_{L' \in \tents} \langle \sigma \rangle_{L'} \langle w \rangle_{L'}  \sum_{\substack{L \in \tents \\ L \subseteq L_0}} \langle \sigma \rangle_L \lvert L \rvert \\
     & \lesssim [ \sigma, w ]_{B_2} \sum_{\substack{L \in \tents \\ L \subseteq L_0}} \langle \sigma \rangle_L \lvert L \rvert .
  \end{align*}
  The remaining sum is controlled by using the maximal function and the sparseness property. %
  We have
  \begin{align*}
    [ \sigma, w ]_{B_2} \sum_{\substack{L \in \tents \\ L \subseteq L_0}} \langle \sigma \rangle_L \lvert L \rvert & \le [ \sigma, w ]_{B_2} \sum_{\substack{L \in \tents \\ L \subseteq L_0}} \inf_L M( \sigma \1_{L_0} ) \lvert L \rvert  \\
                                                         & \le \tau [ \sigma, w ]_{B_2} \sum_{\substack{L \in \tents \\ L \subseteq L_0}} \int_{E_L} M( \sigma \1_{L_0} )  \\
    & \le \tau [ \sigma, w ]_{B_2} \frac{1}{\sigma(L_0)} \int_{L_0} M( \sigma \1_{L_0} ) \, \sigma(L_0) \\
    & \le \tau [ \sigma, w ]_{B_2} \Big( \sup_{L_0 \in \tents} \frac{1}{\sigma(L_0)} \int_{L_0} M( \sigma \1_{L_0} ) \Big)\, \sigma(L_0) \\
    & \lesssim [ \sigma, w ]_{B_2} [\sigma]_{B_\infty} \sigma(L_0) .
  \end{align*}

  This concludes the proof of the proposition.
\end{proof}

The proof of \cref{teo:mixed-bounds-Bergman} follows by combining the sparse domination in \cref{lemma:control_P_by_sparse}
with the bound for the sparse operator in \cref{prop:B2-Binfty-bound}.
This gives the bound
  \begin{equation*}
    \lVert P(\sigma \,\cdot) \rVert_{L^2(\sigma) \to L^2(w)} \le C \, [w, \sigma]_{B_2}^{1/2} \big( [w]_{B_\infty}^{1/2} + [\sigma]_{B_\infty}^{1/2} \big).
  \end{equation*}

  \subsection{Comparison of dyadic and classical characteristics}\label{subsec:comparison}

  We conclude by comparing the volume of a Carleson tent $T_z$  
with the volume of a dyadic tent $\hat{K}_\alpha$.
This is the content of the following two lemmas,
which use the concept of Bergman tree introduced in \cref{sec:dyadic_structure_ball};
see \cite[Lemma 3]{RTW17} and \cite[Lemma 2.4]{MR4134894}.
\begin{lemma}[Rahm, Tchoundja, and Wick, 2017]\label{lemma:volume_comparison}
  There exists a finite collection of Bergman trees $\{ \treestructure_\ell \}_{\ell = 1}^N$ such that
  for any tent $T_z$ there is $\ell\in\{1,\dots,N\}$ and $\alpha$ in $\treestructure_\ell$ such that
  $\hat{K}_{\alpha} \supseteq T_z$ and $\lvert T_z \rvert \eqsim \lvert \hat{K}_\alpha\rvert$.
\end{lemma}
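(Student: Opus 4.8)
The plan is to reduce the statement to two independent facts: a \emph{volume count}, showing that a Carleson tent $T_z$ and a suitably chosen dyadic tent $\hat{K}_\alpha$ are both comparable to the same anisotropic box near $\partial\Ball^d$, so that each has volume $\eqsim (1-\lvert z\rvert)^{d+1}$; and a \emph{covering} step, producing finitely many trees so that every $T_z$ is swallowed by a single dyadic tent at the matching generation. First I would record the volume of the Carleson tent. Writing the pseudo-metric $\rho(\zeta,\eta)\coloneqq\lvert 1-\langle\zeta,\eta\rangle\rvert^{1/2}$, the set $T_z$ is the intersection of $\Ball^d$ with the $\rho$-ball of radius $(1-\lvert z\rvert)^{1/2}$ centred at $z/\lvert z\rvert$. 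Since $(\partial\Ball^d,\rho)$ is Ahlfors $2d$-regular and the radial thickness of $T_z$ is $\eqsim 1-\lvert z\rvert$, integrating in the radial variable gives
\[
  \lvert T_z\rvert \eqsim (1-\lvert z\rvert)^{d}\,(1-\lvert z\rvert)=(1-\lvert z\rvert)^{d+1}.
\]

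Next I would do the dyadic side. Using $\beta(0,z)=\tfrac12\log\tfrac{1+\lvert z\rvert}{1-\lvert z\rvert}$, a point on $\Sphere_{nR}$ satisfies $1-\lvert z\rvert \eqsim e^{-2nR}$, and the partition pieces $\Omega_j^n$, being Bergman balls of radius $\eqsim\delta$, correspond to $\rho$-balls of radius $\eqsim e^{-nR}\eqsim (1-\lvert z\rvert)^{1/2}$ on the sphere. Hence a kube $K_\alpha$ of generation $n$ is comparable to the box with radial thickness $e^{-2nR}$ over a $\rho$-ball of radius $e^{-nR}$, so $\lvert K_\alpha\rvert \eqsim e^{-2(d+1)nR}\eqsim (1-\lvert c_\alpha\rvert)^{d+1}$, and by \cref{lemma:sparsity_of_dyadic_tents} the tent has the same order, $\lvert \hat{K}_\alpha\rvert \eqsim \lvert K_\alpha\rvert$. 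This matches the tent count above once $1-\lvert c_\alpha\rvert \eqsim 1-\lvert z\rvert$.

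I would then set up the matching. Given $z$, choose $n$ maximal with $nR\le\beta(0,z)$, so that the generation-$n$ kubes have $1-\lvert c_\alpha\rvert\eqsim 1-\lvert z\rvert$; the radial projection $\pi_{nR}(T_z)$ onto $\Sphere_{nR}$ is then a $\rho$-ball of radius $\eqsim e^{-nR}$, i.e.\ of bounded Bergman diameter. Because the dyadic tent $\hat{K}_\alpha$ descends all the way to $\partial\Ball^d$ and $T_z$ is itself essentially the cone over its base, once the base $\Omega_j^n$ contains $\pi_{nR}(T_z)$ the nesting of the tent forces $T_z\subseteq\hat{K}_\alpha$, and the two volumes agree up to constants by the previous step. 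Thus the radial direction is automatic, and the whole problem becomes \emph{tangential}: I must guarantee that the projected base always lands inside one partition piece of the right generation.

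This is where the finite family $\{\treestructure_\ell\}_{\ell=1}^N$ enters, and where I expect the main obstacle. A single Bergman tree may split the base between two neighbouring $\Omega_j^n$, exactly as one dyadic grid on $\mathbb{R}$ fails to contain every interval of comparable length. The remedy is the analogue of adjacent/shifted dyadic systems: since $(\Sphere_{nR},\rho)$ is a space of homogeneous type whose doubling and regularity constants are uniform in $n$ (by comparability of the Bergman and Koranyi metrics near the boundary), one fixes finitely many translated partitions of the spheres, inducing finitely many trees $\treestructure_1,\dots,\treestructure_N$ with $N=N(d,R,\delta)$, such that any $\rho$-ball of radius $\eqsim e^{-nR}$ is contained, with comparable size, in a single partition piece of at least one of them. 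I would either invoke the Hytönen--Kairema construction of adjacent dyadic systems on spaces of homogeneous type, or carry out the translation argument directly on each sphere as in \cite{RTW17}. The only delicate points are the uniformity of these geometric constants across generations and checking that the maximal choice of $n$ costs at most a bounded factor in volume; everything else is the routine volume bookkeeping above.
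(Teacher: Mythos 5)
A preliminary remark on the comparison: the paper itself offers \emph{no} proof of \cref{lemma:volume_comparison} — it is imported verbatim from \cite[Lemma 3]{RTW17} (with the companion Huo--Wick lemma for the reverse inclusion), so your proposal can only be measured against the construction in \cite{ARS02,RTW17}. Your skeleton is the same as theirs, and your volume bookkeeping is correct: $\lvert T_z\rvert \eqsim (1-\lvert z\rvert)^{d+1}$; a generation-$n$ kube with $e^{-2nR}\eqsim 1-\lvert z\rvert$ has $\lvert K_\alpha\rvert \eqsim e^{-2(d+1)nR}$; and \cref{lemma:sparsity_of_dyadic_tents} transfers this to $\hat{K}_\alpha$. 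Recognising that the real problem is tangential and is an ``adjacent dyadic grids'' problem on the sphere, solved by finitely many shifted partitions with uniform geometric constants, is also the right high-level picture.

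The gap is the containment step. You assert that once $\pi_{nR}(T_z)\subseteq\Omega_j^n$, ``the nesting of the tent forces $T_z\subseteq\hat{K}_\alpha$''. This is false as stated, because $\hat{K}_\alpha$ is \emph{not} the radial cone over $\Omega_j^n$. A point $\zeta\in T_z$ of generation $m>n$ lies in the kube $K_\beta$ determined by $\pi_{mR}(\zeta)$, but whether $K_\beta\subseteq\hat{K}_\alpha$ is decided by the iterated radial projections of the kube \emph{centres} (a child is declared by $\pi_{kR}(c_i^{k+1})\in\Omega^k$), not by where the points of $K_\beta$ themselves project onto $\mathbb{S}_{nR}$. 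Each step of this centre chain moves the relevant direction by an amount comparable to the diameter of the generation-$k$ pieces, i.e.\ $\eqsim e^{-kR}$ in the boundary pseudo-metric, so the accumulated drift is a geometric sum of order $e^{-nR}\,e^{-R}/(1-e^{-R})$ — the \emph{same order} as the inner radius of $\Omega_j^n$ itself. Hence points of $T_z$ projecting near the edge of $\Omega_j^n$ can perfectly well lie in kubes that are not descendants of $\alpha$, and the inclusion $T_z\subseteq\hat{K}_\alpha$ — the whole point of the lemma — is unproven. The cited construction handles exactly this: one needs (i) the drift estimate above; (ii) the parameter $R$ taken large, or passage to an ancestor a bounded number of generations up (which, by your own volume count, only costs a bounded factor), so that the drift is a small fraction of the inner radius $\delta$ of the pieces; and (iii) a covering statement \emph{with margin}: the finitely many trees must be built so that the base of every tent lies inside the inner ball $B(z_j,\delta)\cap\mathbb{S}_{nR}$ of some piece, at distance from its complement exceeding the drift, not merely inside $\Omega_j^n$. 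Your closing sentence gestures at ``uniformity of constants'' as the delicate point, but this containment mechanism is the actual content hidden behind the citation, and without it the step fails.
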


Note that since a finite union of sparse families is sparse,
if we denote by
\begin{equation*}
  \tents \coloneqq \bigcup_{\ell = 1}^N \tents_\ell \quad \text{ where } \quad \tents_\ell \coloneqq \{ \hat{K}_\alpha \, \colon \, \alpha \in \treestructure_\ell \} \,,
\end{equation*}
then $\tents$ is a sparse collection of sets in the unit ball $\mathbb{B}^d$.

\begin{lemma}[Huo and Wick 2020]
  For any dyadic tent $\hat{K}_{\beta} \in \tents$ there exists a Carleson tent $T_z$
  such that $\hat{K}_{\beta} \subseteq T_z$ and $\lvert \hat{K}_{\beta} \rvert \eqsim \lvert T_z \rvert$. 
\end{lemma}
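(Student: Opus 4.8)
The plan is to realise the dyadic tent $\hat{K}_\beta$ as sitting inside a single non-isotropic Carleson cone of comparable aperture, and then to compare both volumes against a common scale. Suppose the top kube $K_\beta$ lives in the Bergman annulus $B(0,(n+1)R)\setminus B(0,nR)$, so that its centre satisfies $\beta(0,c_\beta)\eqsim nR$. Since $\lvert \zeta\rvert = \tanh\beta(0,\zeta)$, the distance to the boundary at this level is $h \coloneqq 1 - \lvert c_\beta\rvert \eqsim e^{-2nR}$. Writing $\xi \coloneqq c_\beta/\lvert c_\beta\rvert \in \partial\mathbb{B}^d$ for the boundary direction of the top kube, my candidate Carleson tent is $T_z$ with $z \coloneqq (1 - C'h)\,\xi$ for a constant $C'$ to be fixed; note that $1 - \lvert z\rvert \eqsim h$ regardless of the choice of $C'$.

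Next I would establish the inclusion $\hat{K}_\beta \subseteq T_z$. The tent $\hat{K}_\beta = \bigcup_{\gamma \ge \beta} K_\gamma$ is a union of kubes at Bergman levels $m \ge n$, and by construction the angular region $\Omega_\gamma^m$ of each descendant projects under $\pi_{(m-1)R}$ into the angular region of its parent, so the regions are nested as one moves toward the boundary. By property (ii) each $\Omega_\gamma^m$ is contained in a Bergman ball of radius $C\delta$ on the sphere $\mathbb{S}_{mR}$, which near the boundary corresponds to a non-isotropic cap $\{\eta : \lvert 1 - \langle \eta,\xi_\gamma\rangle\rvert \lesssim h_m\}$ of width $h_m \eqsim e^{-2mR}$. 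Because the heights $h_m$ decay geometrically in $m$, the total angular displacement accumulated while descending from level $n$ is bounded by $\sum_{m \ge n} h_m \eqsim h$, so every point of $\hat{K}_\beta$ stays within non-isotropic distance $\lesssim h$ of $\xi$. Choosing $C'$ large enough then gives $\hat{K}_\beta \subseteq \{\zeta : \lvert 1 - \langle \zeta,\xi\rangle\rvert \le C'h\} = T_z$.

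Finally I would compare volumes. On one side, the standard volume estimate for Carleson tents on the ball gives $\lvert T_z\rvert \eqsim (1 - \lvert z\rvert)^{d+1} \eqsim h^{d+1}$. On the other side, $K_\beta \subseteq \hat{K}_\beta$ yields $\lvert \hat{K}_\beta\rvert \ge \lvert K_\beta\rvert$, while \cref{lemma:sparsity_of_dyadic_tents} gives $\lvert \hat{K}_\beta\rvert \le \tau \lvert K_\beta\rvert$; and $K_\beta$ is comparable to a Bergman ball of fixed radius centred at $c_\beta$, whose normalised volume near the boundary is $\eqsim (1 - \lvert c_\beta\rvert)^{d+1} = h^{d+1}$. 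Hence $\lvert \hat{K}_\beta\rvert \eqsim h^{d+1} \eqsim \lvert T_z\rvert$, which together with the inclusion proves the lemma.

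The main obstacle will be the containment step: one must control the cumulative non-isotropic angular spread of all descendant kubes and show it remains bounded by a fixed multiple of the top scale $h$, uniformly as the tent approaches $\partial\mathbb{B}^d$, while also justifying that a Bergman ball of fixed radius at height $h_m$ really is comparable to a non-isotropic cap of width $\eqsim h_m$. This is exactly where the geometry of the Arcozzi--Rochberg--Sawyer construction and the comparison between the Bergman and non-isotropic metrics near the boundary must be invoked carefully; once the inclusion is in place, the volume computations are routine.
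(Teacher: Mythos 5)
The paper itself gives no proof of this lemma: it is quoted from Huo and Wick \cite[Lemma 2.4]{MR4134894}, so there is no internal argument to compare against. What you propose is the natural direct geometric proof, and in outline it is correct, essentially reconstructing what the cited source does: take $\xi = c_\beta/\lvert c_\beta\rvert$ and $h = 1 - \lvert c_\beta\rvert \eqsim e^{-2nR}$, set $z = (1-C'h)\xi$ so that $T_z = \lb \zeta \,\colon\, \lvert 1 - \langle \zeta,\xi\rangle\rvert \le C'h \rb$, prove the inclusion $\hat{K}_\beta \subseteq T_z$ by accumulating the non-isotropic widths of the successive generations, and compare volumes through $\lvert \hat{K}_\beta\rvert \eqsim \lvert K_\beta \rvert \eqsim h^{d+1} \eqsim (1-\lvert z\rvert)^{d+1} \eqsim \lvert T_z\rvert$, where the first comparison is \cref{lemma:sparsity_of_dyadic_tents} and the others are standard. (For the finitely many kubes near the origin, where $C'h \ge 1$ is possible, one takes $T_0 = \mathbb{B}^d$ instead; there all volumes are comparable to $1$.)

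Two details in your containment step need repair. First, the claim that ``the angular region $\Omega_\gamma^m$ of each descendant projects into the angular region of its parent, so the regions are nested'' is not what the construction gives: only the \emph{centre} $c_i^{m+1}$ is required to project into the parent's region $\Omega_k^m$; the region $\Omega_i^{m+1}$ itself may protrude. Your subsequent accumulation argument (each generation displaces the direction by $\lesssim h_m$, and the $h_m$ sum geometrically) is exactly the correct substitute and does not use nesting, so the nesting sentence should simply be deleted. Second, chaining displacements directly in the quantity $\lvert 1 - \langle\cdot,\cdot\rangle\rvert$ is delicate, since it is only a quasi-metric: iterating the quasi-triangle inequality along $k$ generations produces factors $C_0^k$ against the widths $e^{-2(n+k)R}$, and the resulting series converges only if $C_0 e^{-2R} < 1$, i.e.\ for $R$ large. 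The clean fix is to chain in $d(\zeta,\eta) \coloneqq \lvert 1 - \langle \zeta,\eta\rangle\rvert^{1/2}$, which is a genuine metric on the closed ball (a classical fact); there the widths are $\eqsim e^{-mR}$, the total displacement is $\lesssim e^{-nR}/(1-e^{-R})$, and squaring gives the bound $\lesssim h$ with constants depending only on $R$, $\delta$ and $d$. With these two repairs, together with the standard facts you flag (a Bergman ball of fixed radius at height $h$ is comparable to a Koranyi cap of width $\eqsim h$, and such a cap has normalised volume $\eqsim h^{d+1}$), your proof is complete.
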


This result is proved for the disc \cite[Lemma 2.4]{MR4134894};
 the argument can be adapted for $d \ge 2$.
Then it holds that $[w,\sigma]_{B_2} \eqsim [w,\sigma]_{\mathcal{B}_2}$. %
The proof of \cref{teo:mixed-bounds-Bergman} is concluded. \qed

\section{Proof of Theorem \ref{teo:Bumps_for_Bergman}}
We derive a bump condition in $L^2$ for two weights $w,\sigma$ in terms of Orlicz averages.

We follow the approach in \cite[Theorem 5.2]{Kangwei2weight} and \cite[Theorem 6.1]{A_2_remarks}.

\begin{prop}\label{prop:bump_for_sparse}
  Let $\Lambda_{\mathscr{S}}$ be the sparse operator defined in \eqref{eq:sparse_operator}.
  For two weights $w,\sigma$ and two Young functions $\Phi, \Psi \in \mathscr{B}_2$,
  it holds
  \begin{equation*}
    \lVert \Lambda_{\mathscr{S}}( \sigma \,\cdot) \rVert_{L^2(\sigma) \to L^2(w)} \lesssim [\sigma,w]_{\Phi,\Psi} .
  \end{equation*}
\end{prop}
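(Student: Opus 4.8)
The plan is to reduce the operator bound to a bilinear estimate for the sparse form and then to extract the factors defining $[\sigma,w]_{\Phi,\Psi}$, leaving a Carleson-type sum that is controlled by the $L^2$-boundedness of the Orlicz maximal functions granted by \cref{teo:Perez95}. By duality it suffices to show
\begin{equation*}
  \sum_{Q \in \mathscr{S}} \langle \sigma f \rangle_Q \langle w g \rangle_Q \lvert Q \rvert \lesssim [\sigma,w]_{\Phi,\Psi} \, \lVert f \rVert_{L^2(\sigma)} \lVert g \rVert_{L^2(w)}
\end{equation*}
for all nonnegative $f \in L^2(\sigma)$ and $g \in L^2(w)$, since the left-hand side is exactly the pairing $\langle \Lambda_{\mathscr{S}}(\sigma f), g \rangle_{L^2(w)}$ and the kernel of $\Lambda_{\mathscr{S}}$ is positive.

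First I would pass to the weighted averages $\langle f \rangle_{\sigma,Q} \coloneqq \sigma(Q)^{-1}\int_Q f \, \sigma \D \nu$ and $\langle g \rangle_{w,Q} \coloneqq w(Q)^{-1}\int_Q g \, w \D \nu$, so that $\langle \sigma f \rangle_Q = \langle f \rangle_{\sigma,Q}\langle \sigma \rangle_Q$ and $\langle w g \rangle_Q = \langle g \rangle_{w,Q}\langle w \rangle_Q$. Factoring out the ratios appearing in \cref{def:Orlicz_bump},
\begin{equation*}
  \langle \sigma f \rangle_Q \langle w g \rangle_Q = \frac{\langle \sigma \rangle_Q}{\langle \sigma^{1/2} \rangle_{\Phi,Q}} \frac{\langle w \rangle_Q}{\langle w^{1/2} \rangle_{\Psi,Q}} \left( \langle \sigma^{1/2} \rangle_{\Phi,Q}\langle f \rangle_{\sigma,Q} \right)\left( \langle w^{1/2} \rangle_{\Psi,Q}\langle g \rangle_{w,Q} \right),
\end{equation*}
the first product is at most $[\sigma,w]_{\Phi,\Psi}$ for every $Q$. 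Pulling this supremum out of the sum and applying the Cauchy--Schwarz inequality separates the remaining sum into one factor built from $(\sigma,\Phi,f)$ and one built from $(w,\Psi,g)$.

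It then remains to prove the estimate
\begin{equation*}
  \sum_{Q \in \mathscr{S}} \langle \sigma^{1/2} \rangle_{\Phi,Q}^2 \, \langle f \rangle_{\sigma,Q}^2 \, \lvert Q \rvert \lesssim \lVert f \rVert_{L^2(\sigma)}^2 ,
\end{equation*}
together with its analogue for $(w,\Psi,g)$, which is the heart of the argument. By the weighted Carleson embedding theorem (in the spirit of \cite{Kangwei2weight}) this reduces to checking that the coefficients $a_Q \coloneqq \langle \sigma^{1/2} \rangle_{\Phi,Q}^2 \lvert Q \rvert$ satisfy the $\sigma$-Carleson condition $\sum_{Q \subseteq R} a_Q \lesssim \sigma(R)$ for every $R \in \mathscr{S}$. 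Here I would invoke the sparseness of $\mathscr{S}$: choosing the pairwise disjoint sets $E_Q \subseteq Q$ with $\lvert Q \rvert \le \tau \lvert E_Q \rvert$ and using that $\langle \sigma^{1/2} \rangle_{\Phi,Q} \le M_\Phi(\sigma^{1/2}\1_R)(x)$ for every $x \in Q \subseteq R$, one obtains
\begin{equation*}
  \sum_{Q \subseteq R} \langle \sigma^{1/2} \rangle_{\Phi,Q}^2 \lvert Q \rvert \le \tau \sum_{Q \subseteq R} \int_{E_Q} M_\Phi(\sigma^{1/2}\1_R)^2 \D \nu \le \tau \int_R M_\Phi(\sigma^{1/2}\1_R)^2 \D \nu .
\end{equation*}
Since $\Phi \in \mathscr{B}_2$, \cref{teo:Perez95} makes $M_\Phi$ bounded on $L^2$, so the last integral is $\lesssim \int_R \sigma \D \nu = \sigma(R)$, which is precisely the required Carleson condition.

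The main obstacle is exactly this last reduction. The crude bound $\langle \sigma^{1/2} \rangle_{\Phi,Q}^2 \lesssim \langle \sigma \rangle_Q$, valid because membership in $\mathscr{B}_2$ forces $\Phi(t) \lesssim t^2$, only gives $a_Q \lesssim \sigma(Q)$, and summing these does \emph{not} return $\sigma(R)$ because $\mathscr{S}$ need not be sparse with respect to $\sigma \D \nu$. The genuine gain comes from keeping the Orlicz average intact and using the $\mathscr{B}_2$ hypothesis through the $L^2$-boundedness of $M_\Phi$, which is where the bump is essential; this is the step I expect to require the most care. Collecting the two factors produced by Cauchy--Schwarz then yields the asserted bound $\lVert \Lambda_{\mathscr{S}}(\sigma \,\cdot) \rVert_{L^2(\sigma) \to L^2(w)} \lesssim [\sigma,w]_{\Phi,\Psi}$.
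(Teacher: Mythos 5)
Your proof is correct, and it diverges from the paper's after the common opening (duality, passage to the weighted averages $\langle f\rangle_{\sigma,Q}$, $\langle g\rangle_{w,Q}$, and the cube-by-cube extraction of $[\sigma,w]_{\Phi,\Psi}$). From that point the paper runs a ``parallel corona'' decomposition: it builds stopping families $\mathscr{F},\mathscr{G}$ of principal cubes for $(f,\sigma)$ and $(g,w)$, reorganises the sum according to $\pi(Q)=(F,G)$, and alternates Cauchy--Schwarz with \cref{lemma:aux2} (to absorb the Orlicz averages) and \cref{lemma:aux1} (quasi-orthogonality of the stopping averages). You instead apply Cauchy--Schwarz once, keeping $\langle f\rangle_{\sigma,Q}$ attached to $\langle \sigma^{1/2}\rangle_{\Phi,Q}$ and $\langle g\rangle_{w,Q}$ to $\langle w^{1/2}\rangle_{\Psi,Q}$, and then invoke the weighted Carleson embedding theorem for the coefficients $a_Q=\langle\sigma^{1/2}\rangle_{\Phi,Q}^2\lvert Q\rvert$. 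Your verification of the Carleson condition is exactly the paper's \cref{lemma:aux2} (sparseness of $\mathscr{S}$ plus the $L^2$ bound for $M_\Phi$ from \cref{teo:Perez95}); two points worth making explicit are that the embedding theorem asks for the condition on every cube $R$ of the grid, not only $R\in\mathscr{S}$ --- your argument does deliver this, since it nowhere uses $R\in\mathscr{S}$ --- and that on the ball one works within each Bergman tree separately, as the paper does implicitly. The trade-off between the two routes: yours is shorter and structurally cleaner, because the corona construction together with \cref{lemma:aux1} is precisely the standard proof of the weighted Carleson embedding theorem, so you have black-boxed the portion of the argument that the paper writes out by hand; conversely, the paper's proof is self-contained modulo the universal $L^p(\sigma)$ bound for the dyadic weighted maximal function, while yours rests on citing the embedding theorem, e.g.\ in the form used in \cite{Kangwei2weight} or \cite{A_2_remarks}. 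Finally, your closing observation --- that the crude bound $\langle\sigma^{1/2}\rangle_{\Phi,Q}^2\lesssim\langle\sigma\rangle_Q$ cannot close the argument because $\mathscr{S}$ need not be sparse with respect to $\sigma\D{\nu}$ --- is accurate and correctly identifies where the $\mathscr{B}_2$ hypothesis is genuinely needed.
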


We split the proof of \cref{prop:bump_for_sparse} in a few simple steps.
We will use the notation $\langle f \rangle_Q^{\sigma} \coloneqq \sigma(Q)^{-1} \int_Q f \sigma$
and the following lemmata for $p=2$.
\begin{lemma}\label{lemma:aux1}
  Let $\sigma$ be a weight and let $\mathscr{S}$ be a $\frac{1}{\tau}$-sparse family with respect to the measure $\sigma \D{\nu}$.
  For $1< p < \infty$ and a function $f$ we have
  \begin{equation*}
    \Big( \sum_{F \in \mathscr{S}} (\langle f\rangle_F^{\sigma} )^p \sigma(F) \Big)^{1/p} \lesssim_{\tau,p} \lVert f\rVert_{L^p(\sigma)}
  \end{equation*}
  where the implicit constant depends only on the sparse family and on the exponent $p$.
\end{lemma}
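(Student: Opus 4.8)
The inequality is a weighted Carleson embedding, and the plan is to deduce it from the dyadic Carleson embedding theorem applied to the measure $\mu \coloneqq \sigma\D{\nu}$, for which the weighted averages $\langle f\rangle_F^\sigma$ are exactly the $\mu$-averages. Concretely, the sequence $a_F \coloneqq \sigma(F) = \mu(F)$ is the one being tested, so once I know the packing (Carleson) bound
\[
  \sum_{\substack{F \in \mathscr{S} \\ F \subseteq F_0}} \sigma(F) \le A\,\sigma(F_0) \qquad \text{for every } F_0 \in \mathscr{S},
\]
the embedding theorem yields $\sum_{F} \sigma(F)\bigl(\langle f\rangle_F^\sigma\bigr)^p \le (p')^p A\,\lVert f\rVert_{L^p(\sigma)}^p$, which is the claim after taking $p$-th roots.

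The only thing to check is therefore the packing bound, and here is where sparseness enters. For each $F$ the distinguished set $E_F \subseteq F$ satisfies $\sigma(F) \le \tau\,\sigma(E_F)$, and the family $\{E_F\}_{F\in\mathscr{S}}$ is pairwise disjoint; since all the $E_F$ with $F \subseteq F_0$ sit inside $F_0$, summing gives
\[
  \sum_{\substack{F \in \mathscr{S} \\ F \subseteq F_0}} \sigma(F) \le \tau \sum_{\substack{F \in \mathscr{S} \\ F \subseteq F_0}} \sigma(E_F) \le \tau\,\sigma(F_0),
\]
so $A = \tau$ works. Equivalently, one may argue through the maximal operator $M_\sigma f \coloneqq \sup_{F\in\mathscr{S}} \langle \lvert f\rvert\rangle_F^\sigma\,\1_F$: bounding $\langle f\rangle_F^\sigma$ by $M_\sigma f$ on $E_F$ and using disjointness reduces the sum to $\tau\int (M_\sigma f)^p\,\sigma\D{\nu}$, which is controlled by $\lVert f\rVert_{L^p(\sigma)}^p$ through Doob's inequality with the weight-independent constant $(p')^p$.

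The step I expect to be the genuine obstacle is the inequality $\sigma(F) \le \tau\,\sigma(E_F)$, that is, sparseness measured against $\sigma\D{\nu}$ rather than against $\D{\nu}$: the bare $\frac{1}{\tau}$-sparseness of \cref{def:sparse} controls only the Lebesgue mass $\lvert E_F\rvert$, and a weight can concentrate on a long nested chain of tents to defeat the packing bound. For the weights relevant here this transfer is supplied by the remark following \cref{lemma:sparsity_of_dyadic_tents}; in general one first passes to the $\sigma$-sparse subfamily of $\sigma$-stopping tents (those on which the $\sigma$-average roughly doubles), which is automatically packed against $\sigma\D{\nu}$ and on which the argument above runs verbatim. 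Everything else—the embedding theorem and the bound for $M_\sigma$—is classical and independent of the weight.
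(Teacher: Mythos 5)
Your proposal is correct and takes essentially the same route as the paper: the paper's proof likewise bounds $\langle f\rangle_F^{\sigma} \le \inf_{E_F} M^{\sigma}f$, sums over the disjoint sets $E_F$, and invokes the weight-independent bound $\lVert M^{\sigma}\rVert_{L^p(\sigma)\to L^p(\sigma)} \le p'$, which is exactly your maximal-function/Carleson-embedding argument. Your diagnosis of the genuine obstacle is also exactly on target: the paper's proof opens by \emph{assuming} that $\mathscr{S}$ is $\frac{1}{\tau}$-sparse with respect to the measure $\sigma\D{x}$ (silently strengthening the stated Lebesgue-sparseness hypothesis), and in the application this $\sigma$-sparseness holds because the lemma is invoked for the stopping families $\mathscr{F}$, $\mathscr{G}$, which are packed with respect to $\sigma$ and $w$ by construction --- precisely the transfer you identified as necessary.
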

\begin{proof}
  Since $\mathscr{S}$ is $\frac1{\tau}$-sparse, %
  for every $F \in \mathscr{S}$ there is $E_F \subseteq F$ with $\sigma(F) \le \tau \sigma(E_F)$,
  and the $\{E_F\,:\, F \in \mathscr{S}\}$ are disjoint.
  Let $M^\sigma$ be the maximal function defined by
  \begin{equation*}
    M^\sigma f \coloneqq \sup_{F \in \mathscr{S}} \langle \lvert f\rvert\rangle_F^{\sigma} \1_F .
  \end{equation*}
  We bound
  \begin{align*}
    \sum_{F \in \mathscr{S}} (\langle f\rangle_F^\sigma )^p \sigma(F) & \le \tau \sum_{F \in \mathscr{S}} \big(\inf_{E_F} M^{\sigma}f\big)^p \sigma(E_F) \\
                                                                  & \le \tau \sum_F \int_{E_F} \lvert M^{\sigma}f\rvert^p \sigma \D{\nu} \\
                                                                  & \le \tau \lVert M^{\sigma} \rVert_{L^p(\sigma) \to L^p(\sigma)}^p \lVert f\rVert_{L^p(\sigma)}^p .
  \end{align*}
  Since the norm of the dyadic maximal function $\lVert M^{\sigma} \rVert_{L^p(\sigma) \to L^p(\sigma)} \le p'$
  and does not depend on the weight $\sigma$, the result follows.
  The estimate for the maximal function is classical, a proof in our case can be found in 
  \cite[Lemma 3.13]{MR4263006}.
\end{proof}
\begin{lemma}\label{lemma:aux2}
  Let $\mathscr{S}$ be a $\frac{1}{\tau}$-sparse family, $\tau \ge 1$.
  For $1<p<\infty$ let $\Psi \in \mathscr{B}_p$ be a Young function.
  Then for any $G \in \mathscr{S}$ the following estimate holds
  \begin{equation*}
    \sum_{\substack{Q \in \mathscr{S} \\ Q \subseteq G}} \langle w^{1/p} \rangle_{\Psi,Q}^p \lvert Q \rvert \lesssim w(G)
  \end{equation*}
  where the implicit constant depends only on $\tau$ and $\lVert M_{\Psi} \rVert_{L^p \to L^p}$. 
\end{lemma}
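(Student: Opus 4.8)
The plan is to mirror the proof of \cref{lemma:aux1}: I would use the sparseness of $\mathscr{S}$ to trade the volume $\lvert Q\rvert$ for the disjoint sets $\lvert E_Q\rvert$, and then dominate each bumped average pointwise by the Orlicz maximal function $M_\Psi$, whose $L^p$ boundedness is exactly the hypothesis $\Psi \in \mathscr{B}_p$ via \cref{teo:Perez95}.

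First I would record the key pointwise bound. Fix $G \in \mathscr{S}$. For every $Q \in \mathscr{S}$ with $Q \subseteq G$ and every $x \in Q$, the Orlicz average localizes: since $Q \subseteq G$, the average only sees the values of $w^{1/p}$ on $G$, so $\langle w^{1/p}\rangle_{\Psi,Q} = \langle w^{1/p}\1_G\rangle_{\Psi,Q} \le M_\Psi(w^{1/p}\1_G)(x)$, the last step being the very definition of $M_\Psi$ as a supremum over sets containing $x$. Restricting $x$ to the sparse set $E_Q \subseteq Q$ gives $\langle w^{1/p}\rangle_{\Psi,Q}^p \le \inf_{E_Q}\big(M_\Psi(w^{1/p}\1_G)\big)^p$.

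With this in hand, using that $\mathscr{S}$ is $\frac1\tau$-sparse (\cref{def:sparse}), hence $\lvert Q\rvert \le \tau\lvert E_Q\rvert$, the chain of inequalities reads
\begin{align*}
  \sum_{\substack{Q \in \mathscr{S} \\ Q \subseteq G}} \langle w^{1/p} \rangle_{\Psi,Q}^p \lvert Q \rvert
  &\le \tau \sum_{\substack{Q \in \mathscr{S} \\ Q \subseteq G}} \langle w^{1/p} \rangle_{\Psi,Q}^p \lvert E_Q \rvert \\
  &\le \tau \sum_{\substack{Q \in \mathscr{S} \\ Q \subseteq G}} \int_{E_Q} \big( M_\Psi(w^{1/p}\1_G) \big)^p \D{\nu} \\
  &\le \tau \int_G \big( M_\Psi(w^{1/p}\1_G) \big)^p \D{\nu} \\
  &\le \tau \lVert M_\Psi \rVert_{L^p \to L^p}^p \int_G w \D{\nu} = \tau \lVert M_\Psi \rVert_{L^p \to L^p}^p \, w(G) ,
\end{align*}
where the third line uses the pairwise disjointness of the $\{E_Q\}$ together with $E_Q \subseteq Q \subseteq G$, the fourth line uses the $L^p$ boundedness of $M_\Psi$, and the final identity uses $\int (w^{1/p}\1_G)^p \D{\nu} = w(G)$.

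The only point requiring care is the localization to $G$: one must feed $M_\Psi$ the truncated function $w^{1/p}\1_G$ rather than $w^{1/p}$, so that the right-hand side becomes $w(G)$ rather than $w$ of the whole ball. This is legitimate precisely because all the summands are indexed by $Q \subseteq G$, which is also what confines the disjoint sets $E_Q$ to $G$. Everything else is routine, and the implicit constant is $\tau\lVert M_\Psi\rVert_{L^p\to L^p}^p$, depending only on $\tau$ and on $\Psi$ through \cref{teo:Perez95}, as claimed.
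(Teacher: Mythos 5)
Your proof is correct and follows exactly the same route as the paper's: localize the Orlicz average to $G$ (using $Q \subseteq G$), trade $\lvert Q\rvert$ for $\lvert E_Q\rvert$ by sparseness, dominate pointwise on $E_Q$ by $M_\Psi(w^{1/p}\1_G)$, sum over the disjoint $E_Q$'s, and invoke the $L^p$ boundedness of $M_\Psi$ from \cref{teo:Perez95}. Your write-up simply makes explicit the pointwise infimum step that the paper leaves implicit, and the constant $\tau \lVert M_\Psi\rVert_{L^p\to L^p}^p$ matches.
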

\begin{proof}
  By \cref{teo:Perez95}, since $\Psi \in \mathscr{B}_p$ the maximal function $M_{\Psi}$ is bounded on $L^p$.
  For $Q \subseteq G$, we have $\langle w^{1/p} \rangle_{\Psi,Q} = \langle w^{1/p} \1_G\rangle_{\Psi,Q}$.
  Then $\lvert Q \rvert \le \tau \lvert E_Q \rvert $ and
 \begin{align*}
   \sum_{\substack{Q \in \mathscr{S} \\ Q \subseteq G}} \langle w^{1/p} \rangle_{\Psi,Q}^p \lvert Q \rvert 
   & \le \tau \sum_{\substack{Q \in \mathscr{S} \\ Q \subseteq G}} \int_{E_Q} M_{\Psi}( w^{1/p} \1_G)^p  \\
   & \le \tau \int_{G} M_{\Psi}( w^{1/p} \1_G)^p  \\
   & \le \tau \lVert M_{\Psi} \rVert_{L^p \to L^p}^p \, \lVert w^{1/p} \rVert_{L^p(G)}^p .
 \end{align*}

\end{proof}

We are ready to prove \cref{prop:bump_for_sparse}.
We recall the two testing conditions in \eqref{eq:testing_condition_on_sparse}:
\begin{align*}
  \lVert \Lambda_{\mathscr{S}}(\sigma \1_Q) \rVert_{L^p(w)}^p & \le \testC \sigma(Q) \, , \quad \forall\, Q \in \mathscr{S} \\
  \lVert \Lambda_{\mathscr{S}}(w \1_Q) \rVert_{L^{p'}(\sigma)}^{p'} & \le \testC' w(Q)  \, , \quad \forall\, Q \in \mathscr{S} %
\end{align*}
By symmetry, it is enough to focus on one of the two.
\subsubsection*{1. Reduction to dyadic form}
By duality,
the left hand side of the two-weight estimate %
\begin{equation*}
  \lVert \Lambda(f\sigma) \rVert_{L^2(w)} \le C \lVert f\rVert_{L^2(\sigma)}
\end{equation*}
is the supremum over $g \in L^2(w)$ of $\lvert\langle \Lambda(f \sigma) , g w \rangle\rvert$.
Then it is enough to show that for non-negative functions $f$ and $g$ we have
\begin{align*}
  \lvert \langle \Lambda(f \sigma) , g w \rangle\rvert & = \sum_{Q \in \mathscr{S}} \langle f \sigma \rangle_Q \langle g w\rangle_Q \lvert Q \rvert \\
  & = \sum_{Q \in \mathscr{S}} \langle f \rangle_Q^{\sigma} \langle g \rangle_Q^{w} \langle \sigma \rangle_Q \langle w \rangle_Q \lvert Q \rvert 
  \lesssim [\sigma,w]_{\Phi,\Psi} \lVert f \rVert_{L^2(\sigma)} \lVert g \rVert_{L^2(w)}.
\end{align*}

\subsubsection*{2. Stopping families}
We assume that both $f,g$ are both non-negative and supported on the set $Q_0$.
Let $\mathcalboondox{D}(Q_0)$ be the family of dyadic cubes inside $Q_0$. 
We will select special cubes from $\mathcalboondox{D}(Q_0)$ using the ``parallel corona'' decomposition.
We denote the principal cubes for $(f,\sigma)$ and $(g,w)$ by $\mathscr{F}$
and  $\mathscr{G}$ respectively.
These are defined as stopping families for the weighted averages of $f$ and $g$:
\begin{align*}
  \mathcalboondox{A}_f^\star(Q) &= \{ S\in\mathcalboondox{D}(Q), S \text{ maximal} \, : \, \langle f \rangle_S^{\sigma} > 2 \langle f \rangle_{Q}^{\sigma} \} , \\
  \mathcalboondox{A}_g^\star(Q) &= \{ S\in\mathcalboondox{D}(Q), S \text{ maximal} \, : \, \langle g \rangle_S^{w} > 2 \langle g \rangle_{Q}^{w} \} .
\end{align*}
Then we define
\begin{equation*}
  \mathscr{F}_0 \coloneqq \{Q_0\}, \qquad \mathscr{F}_{n+1} \coloneqq \bigcup_{Q \in \mathscr{F}_n} \mathcalboondox{A}^\star_f(Q), \qquad \mathscr{F} \coloneqq \bigcup_{n \in \mathbb{N}} \mathscr{F}_n 
\end{equation*}
and in a similar way for $\mathscr{G}$.
The families $\mathscr{F}$ and $\mathscr{G}$ constructed in this way are sparse
with respect to the measures $\sigma \D{\nu}$ and $w \D{\nu}$, respectively.
We denote by
$\pi_{\mathscr{F}}(Q)$ the minimal cube in $\mathscr{F}$ containing
$Q$, and similarly for $\pi_{\mathscr{G}}(Q)$.  Given a pair of cubes
$(F,G) \in \mathscr{F}\times\mathscr{G}$, we consider the collection of cubes
such that their projection to $\mathscr{F}$ and $\mathscr{G}$ are $F$
and $G$ respectively. Such collection is
\begin{equation*}
   \{ Q \,\colon\, \pi(Q) =  (F,G) \}, \quad \text{where } \; \pi(Q) \coloneqq  (\pi_{\mathscr{F}}(Q),\pi_{\mathscr{G}}(Q)) .
\end{equation*}

Using the stopping families we can write
\begin{equation*}
  \sum_{Q \in \mathscr{S}} = \sum_{F \in \mathscr{F}} \sum_{G \in \mathscr{G}} \sum_{\substack{Q \in \mathscr{S} \\ \pi(Q) = (F,G)}}.
\end{equation*}
Since either $F \subseteq G$ or $F \supseteq G$,
by symmetry it is enough to study only one case.
We focus on the latter.
Notice that since $\pi_{\mathscr{G}}(Q) = G \subseteq F$, then $F$ is the minimal cube in $\mathscr{F}$ containing $G$,
namely $\pi_{\mathscr{F}}(G) = F$.
We have
\begin{align}
  \sum_{F \in \mathscr{F}} \sum_{\substack{G \in \mathscr{G} \\ \pi_{\mathscr{F}}(G) = F}} \sum_{\substack{Q \in \mathscr{S} \\ \pi(Q) = (F,G)}} & \langle f \rangle_Q^{\sigma} \langle g \rangle_Q^{w} \langle \sigma \rangle_Q \langle w \rangle_Q \lvert Q \rvert \nonumber \\
                                                                                                                                   & \le 4 \sum_{F \in \mathscr{F}} \langle f \rangle_F^{\sigma}  \sum_{\substack{G \in \mathscr{G} \\ \pi_{\mathscr{F}}(G) = F}} \langle g \rangle_G^{w}  \sum_{\substack{Q \in \mathscr{S} \\ \pi(Q) = (F,G)}} \langle \sigma \rangle_Q \langle w \rangle_Q \lvert Q \rvert.
  \label{eq:before_input_Orlicz_bumps}
 \end{align}

 \subsubsection*{3. Insert Orlicz bumps}
 We focus on the last summand in \eqref{eq:before_input_Orlicz_bumps}. We see that
 \begin{equation*}
   \langle \sigma \rangle_Q \langle w \rangle_Q =  \left(\frac{\langle \sigma \rangle_Q \langle w \rangle_Q}{\langle \sigma^{1/2} \rangle_{\Phi,Q} \langle w^{1/2} \rangle_{\Psi,Q}}\right) \langle \sigma^{1/2} \rangle_{\Phi,Q} \langle w^{1/2} \rangle_{\Psi,Q}.
 \end{equation*}
 The supremum over all dyadic cubes $Q$ of the quantity in brackets is $[\sigma,w]_{\Phi,\Psi}$.
 Then we have 
 \begin{equation*}
   \sum_{\substack{Q \in \mathscr{S} \\ \pi(Q) = (F,G)}} \langle \sigma \rangle_Q \langle w \rangle_Q \lvert Q \rvert
   \le [\sigma,w]_{\Phi,\Psi}  \sum_{\substack{Q \in \mathscr{S} \\ \pi(Q) = (F,G)}} \langle \sigma^{1/2} \rangle_{\Phi,Q} \langle w^{1/2} \rangle_{\Psi,Q} \lvert Q \rvert.
 \end{equation*}

 Using the Cauchy--Schwarz inequality and \cref{lemma:aux2} we estimate
 \begin{align*}
   \sum_{\substack{Q \in \mathscr{S} \\ \pi(Q) = (F,G)}} & \langle \sigma^{1/2} \rangle_{\Phi,Q} \langle w^{1/2} \rangle_{\Psi,Q} \lvert Q \rvert \\
   & \le \Big(\sum_{\substack{Q \in \mathscr{S} \\ \pi(Q) = (F,G)}} \langle \sigma^{1/2} \rangle_{\Phi,Q}^2 \lvert Q \rvert \Big)^{1/2} \Big( \sum_{\substack{Q \in \mathscr{S} \\ \pi(Q) = (F,G)}} \langle w^{1/2} \rangle_{\Psi,Q}^2 \lvert Q \rvert \Big)^{1/2} \\ %
   & \lesssim \Big(\sum_{\substack{Q \in \mathscr{S} \\ \pi(Q) = (F,G)}} \langle \sigma^{1/2} \rangle_{\Phi,Q}^2 \lvert Q \rvert \Big)^{1/2} w(G)^{1/2} .
 \end{align*}

 Putting all the estimates together, and using the Cauchy--Schwarz inequality in $\ell^2$ in the third and and fifth inequality
 and \cref{lemma:aux2} in the second and the fourth,
 we obtain
 \begin{align*}
   \sum_{F \in \mathscr{F}} & \langle f \rangle^\sigma_F  \sum_{\substack{G \in \mathscr{G} \\ \pi_{\mathscr{F}}(G) = F}} \langle g \rangle^w_G \sum_{\substack{Q \in \mathscr{S} \\ \pi(Q) = (F,G)}} \langle \sigma \rangle_Q \langle w \rangle_Q \lvert Q \rvert \\
   & \le  [\sigma,w]_{\Phi,\Psi}  \sum_{F \in \mathscr{F}} \langle f \rangle^\sigma_F  \sum_{\substack{G \in \mathscr{G} \\ \pi_{\mathscr{F}}(G) = F}} \langle g \rangle^w_G  \sum_{\substack{Q \in \mathscr{S} \\ \pi(Q) = (F,G)}} \langle \sigma^{1/2} \rangle_{\Phi,Q} \langle w^{1/2} \rangle_{\Psi,Q} \lvert Q \rvert \\
  & \lesssim  [\sigma,w]_{\Phi,\Psi}  \sum_{F \in \mathscr{F}} \langle f \rangle^\sigma_F  \sum_{\substack{G \in \mathscr{G} \\ \pi_{\mathscr{F}}(G) = F}} \langle g \rangle^w_G  \Big(\sum_{\substack{Q \in \mathscr{S} \\ \pi(Q) = (F,G)}} \langle \sigma^{1/2} \rangle_{\Phi,Q}^2  \lvert Q \rvert\Big)^{1/2} w(G)^{1/2} \\
   & \le  [\sigma,w]_{\Phi,\Psi}  \sum_{F \in \mathscr{F}} \langle f \rangle^\sigma_F  \Big(\sum_{\substack{G \in \mathscr{G} \\ \pi_{\mathscr{F}}(G) = F}} (\langle g \rangle^w_G)^2 w(G) \Big)^{1/2}
   \Big(\sum_{\substack{G \in \mathscr{G} \\ \pi_{\mathscr{F}}(G) = F}} \sum_{\substack{Q \in \mathscr{S} \\ \pi(Q) = (F,G)}} \langle \sigma^{1/2} \rangle_{\Phi,Q}^2  \lvert Q \rvert\Big)^{1/2}  \\
    & \lesssim  [\sigma,w]_{\Phi,\Psi}  \sum_{F \in \mathscr{F}} \langle f \rangle^\sigma_F  \Big(\sum_{\substack{G \in \mathscr{G} \\ \pi_{\mathscr{F}}(G) = F}} (\langle g \rangle^w_G)^2 w(G) \Big)^{1/2} \sigma(F)^{1/2}\\
    & \le  [\sigma,w]_{\Phi,\Psi}  \Big(\sum_{F \in \mathscr{F}} (\langle f \rangle^\sigma_F)^2 \sigma(F) \Big)^{1/2} \Big(\sum_{F \in \mathscr{F}} \sum_{\substack{G \in \mathscr{G} \\ \pi_{\mathscr{F}}(G) = F}} (\langle g \rangle^w_G)^2 w(G) \Big)^{1/2} \\
   & \lesssim  [\sigma,w]_{\Phi,\Psi} \lVert f \rVert_{L^2(\sigma)} \lVert g\rVert_{L^2(w)} 
 \end{align*}
 where the last inequality follows from  \cref{lemma:aux1},
 concluding the proof. \qed

\section*{Acknowledgements}
This work is part of the author's PhD thesis
and it has been initiated in Ghent during the (unplanned) staying of the author.
I would like to thank Angelos Nersesian, Michele Mastropietro and the staff at the University of Ghent %
for their warm hospitality.
I would also like to thank Maria Carmen Reguera
for stimulating (virtual) discussions during those months,
and the anonymous referees
for providing important suggestions that improve the article.

\nocite{PR13,APR17,RTW17, CMP07, FangWang2015, HytKairema}
\printbibliography

\end{document}